\documentclass[11pt]{amsart}

\usepackage[margin = 1.0in]{geometry}

\usepackage{amsmath,amssymb,amsthm}
\usepackage{hyperref}

\newtheorem{theorem}{Theorem}[section]
\newtheorem{lemma}[theorem]{Lemma}
\newtheorem{proposition}[theorem]{Proposition}
\newtheorem{corollary}[theorem]{Corollary}

\theoremstyle{remark}
\newtheorem{remark}[theorem]{Remark}

\newcommand{\C}{\mathbb C}
\newcommand{\R}{\mathbb R}
\newcommand{\N}{\mathbb N}
\newcommand{\Disk}{\mathbb D}
\newcommand{\eps}{\varepsilon}
\newcommand{\diam}{\operatorname{diam}}
\newcommand{\spec}{\operatorname{spec}}
\newcommand{\ord}{\operatorname{ord}}
\newcommand{\osc}{\operatorname{osc}}
\newcommand{\rad}{\operatorname{rad}}
\newcommand{\esssup}{\operatorname*{ess sup}}

\numberwithin{equation}{section}

\title[Disk-Growth Remez and Tur\'an-Nazarov]{A Disk-Growth Remez Principle and a Modular Proof of the Measurable Tur\'an-Nazarov Inequality}

\author[Friedland]{Omer Friedland}
\address{Institut de Math\'ematiques de Jussieu, Sorbonne Universit\'e, 4 place Jussieu, 75005 Paris, France}
\email{omer.friedland@imj-prg.fr}
\date{\today}

\subjclass[2020]{Primary 41A17; Secondary 30A10, 42A05}
\keywords{Remez inequalities, Tur\'an-Nazarov inequality, exponential
polynomials, disk growth, Cartan coverings, geometric means, logarithmic
derivatives}

\begin{document}

\begin{abstract}
We give a modular proof of the measurable Tur\'an-Nazarov inequality for
exponential polynomials. The proof first establishes a Remez principle for
holomorphic functions satisfying two disk-growth assumptions. The global growth
assumption controls the number of relevant zeros, while the local growth
assumption gives an effective degree. This yields Cartan coverings, sublevel
estimates, and a geometric-mean Remez inequality.

For exponential polynomials with bounded spectral diameter, the required disk
growth follows from the classical interval Tur\'an inequality. For large
spectral diameter, we use a first-order pruning step. If
$\rho = \diam(\spec p)$ and $a\in\spec p$, then
$$
Q_a = \rho^{-1}(D-a)p
$$
has one fewer exponential term, and the quotient $Q_a/p$ satisfies an absolute
weak distribution estimate away from the zero set of $p$.

Writing
$$
Q_a = \rho^{-1}(D-a)p,
\quad
Q_b = \rho^{-1}(D-b)p
$$
for two farthest spectral points $a,b$ gives
$$
Q_a-Q_b = \frac{b-a}{\rho}p,
\quad |b-a| = \rho,
$$
and hence $|p|\le |Q_a|+|Q_b|$. The induction is carried out in
geometric-mean form on the original measurable set. This avoids losing a fixed
proportion of the set at each step and gives the classical measurable
Tur\'an-Nazarov inequality with the sharp algebraic exponent $m-1$. The final
measurable $L^\infty$ estimate is classical; the point here is the modular
proof and the geometric-mean induction. The only Tur\'an-type input is the
classical interval Tur\'an inequality.
\end{abstract}

\maketitle

%%%%%%%%%%%%%%%%%%%%%%%%%%%%%%%%%%%%%%%%%%%
\section{Introduction and main results}

Remez-type inequalities compare the size of a function on a large set with its
size on a smaller controlling set. The classical theorem of Remez gives such a
comparison for algebraic polynomials on intervals. Multidimensional and
geometric variants were developed, among others, by Brudnyi and Ganzburg,
Ganzburg, Yomdin, Brudnyi, and Brudnyi-Yomdin; see
\cite{Remez,BrudnyiGanzburg,Ganzburg,YomdinDiscrete,Brudnyi,BrudnyiYomdin}
and the references there. For exponential polynomials the corresponding
measurable result is the Tur\'an-Nazarov inequality, originating in Tur\'an's
interval lemma and Nazarov's local estimates for exponential sums; see
\cite{Turan,Turan1984,Nazarov,NazarovComplete}. The geometric refinements of
Friedland and Yomdin replace Lebesgue measure in Tur\'an-Nazarov by a metric
span-type invariant, and are another motivation for the present point of view
\cite{FriedlandYomdin}.

The purpose of this note is to give a modular proof of the measurable
Tur\'an-Nazarov theorem. The final measurable $L^\infty$ estimate is
classical; the contribution of the present note is the proof architecture: a
disk-growth Remez principle, a weak logarithmic-derivative pruning step, and a
geometric-mean induction which avoids iterative losses of measure. The proof
first yields a geometric-mean form and then the usual measurable $L^\infty$
form. It has three steps. First, when the spectrum has bounded diameter, the
classical interval Tur\'an inequality implies disk growth. A disk-growth Remez
principle then gives a geometric-mean Remez estimate. Second, when the spectral
diameter is large, one can prune one exponent by applying a first-order
operator. If $a\in\spec p$ and $\rho = \diam(\spec p)$, then
$$
Q_a = \rho^{-1}(D-a)p
$$
has order one less than $p$, and the quotient $Q_a/p$ satisfies an absolute
weak distribution estimate on the normalized interval. Third, choosing two
farthest exponents $a,b$ gives
$$
Q_a-Q_b = \frac{b-a}{\rho}p,
\quad |b-a| = \rho,
$$
and hence
$$
|p|\le |Q_a|+|Q_b|.
$$
The induction is run in geometric-mean form on the original measurable set.
This is the point that prevents a loss of a fixed proportion of the set at each
order-reduction step.

We emphasize the non-circularity of the argument. The only Tur\'an-type input
is the classical interval Tur\'an inequality. It is used first to obtain disk
growth in the bounded-diameter case, and later, through the same disk-growth
consequence, to count zeros in the proof of the weak logarithmic-derivative
estimate. No measurable Tur\'an-Nazarov inequality is used as an input.

We now introduce notation. Let
$$
I_0 = [-1/2,1/2],
\quad
\Disk(a,r) = \{z\in\C:|z-a|<r\}.
$$
A reduced exponential polynomial is a finite sum
$$
p(t) = \sum_{\lambda\in\Lambda}c_\lambda e^{\lambda t},
\quad c_\lambda\ne0,
$$
with distinct exponents. We write
$$
\spec p = \Lambda,
\quad
\ord p = |\Lambda|.
$$
For the zero polynomial we set $\spec(0) = \varnothing$ and $\ord(0) = 0$.
Throughout, $D = d/dt$. The letters $C,c,C_0,C_1,\ldots$ denote positive
absolute constants whose values may change from line to line unless explicitly
fixed.

Zeros are counted with multiplicity. Zeros in closed disks are always counted
with the standard limiting convention: one first counts zeros in slightly
smaller or larger disks and then lets the auxiliary radius tend to the desired
value.

If $E\subset\R$ is measurable with $|E|>0$, and if $g$ is a nonzero
exponential polynomial, define
$$
G_E(g) = \exp\left(\frac1{|E|}\int_E\log |g(t)| dt\right).
$$
The zero set of a nonzero exponential polynomial is finite on compact real
intervals, unless the polynomial is identically zero. After redefining
$\log |g|$ on this finite set, the logarithmic singularities are integrable.
We use the same notation whenever $\log |g|\in L^1(E)$ with this harmless
finite-set convention; this will be checked explicitly in the few places where
$g$ is not an exponential polynomial.

The first result is the abstract disk-growth Remez principle. Its proof in
Section \ref{sec:dg} is short because the elementary Blaschke,
Harnack, and Cartan details are collected in Appendix \ref{app:bc}.

\begin{theorem}[Disk-growth Remez principle] \label{thm:dg}
Let $f\not\equiv0$ be holomorphic in $\Disk(0,5)$. Let $d\in\N$,
$d\ge1$, and let $H_0\ge1$, $H_1\ge d$. Assume
\begin{align}
\sup_{\Disk(0,4)}|f|
&\le e^{H_0}\sup_{\Disk(0,1)}|f| \label{G0}\\
\sup_{\Disk(x,4)}|f|
&\le e^{H_1}r^{-d}\sup_{\Disk(x,r)}|f|,
\quad x\in I_0,
\quad 0<r\le1. \label{G1}
\end{align}
Then, for such an $f$, the zero set on $I_0$ is finite. After redefining
$\log |f|$ on this finite set, the logarithmic singularities are integrable;
thus $G_E(f)$ is well-defined for every measurable $E\subset I_0$ with
$|E|>0$. For every such set, writing $|E| = \alpha$, one has
$$
\sup_{I_0}|f| \le e^{C(H_0+H_1)}
\left(\frac C\alpha\right)^d G_E(f).
$$
Consequently,
$$
\sup_{I_0}|f| \le e^{C(H_0+H_1)} \left(\frac C\alpha\right)^d \esssup_E |f|.
$$
Moreover, for every $0<h\le1$, there exists
$\Omega_h\subset I_0$, representable as a union of at most
$\lceil C H_0\rceil$ intervals, such that $|\Omega_h|\le Ch$ and
$$
|f(x)| \ge e^{-C(H_0+H_1)}h^d\sup_{I_0}|f|, \quad x\in I_0\setminus\Omega_h.
$$
In particular, for $0<\eps\le1$,
$$
\left|
\left\{x\in I_0:
|f(x)|\le\eps\sup_{I_0}|f|
\right\}
\right| \le
C\exp\left(C\frac{H_0+H_1}{d}\right)\eps^{1/d}.
$$
\end{theorem}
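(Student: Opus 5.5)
We normalize $\sup_{\Disk(0,1)}|f|=1$. The proof then runs through a Blaschke factorization, a Cartan covering of the zeros, and a pointwise lower bound. The three conclusions follow from that bound.

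\textbf{Step 1: counting zeros.} By Jensen's formula on $\Disk(0,4)$ and \eqref{G0}, the number $N$ of zeros of $f$ in $\overline{\Disk(0,3)}$ satisfies $N\le CH_0$. Jensen needs a center where $|f|$ is not too small. We take $z_0\in\overline{\Disk(0,1)}$ with $|f(z_0)|=1$ and apply Jensen on $\Disk(z_0,3)\subset\Disk(0,4)$. Since $\Disk(z_0,3)$ contains $\Disk(0,2)$, this counts all zeros in $\overline{\Disk(0,2)}$. In particular, $f$ has finitely many zeros on $I_0$.

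The local assumption \eqref{G1} gives a finer count. Fix $x\in I_0$ and $0<r\le1$, and take $z_1\in\overline{\Disk(x,r)}$ maximizing $|f|$ there. Jensen on $\Disk(z_1,2)\subset\Disk(x,4)$ bounds the number of zeros in $\Disk(z_1,1)$, hence the number near $x$, by $C(H_1+d\log(1/r))$. More useful is the resulting Harnack-type bound. Write $f=B\cdot g$ on $\Disk(0,2)$, where $B(z)=\prod_j (z-z_j)$ runs over the $N$ zeros in $\Disk(0,2)$ and $g$ has no zeros there. The Borel--Carathéodory/Harnack argument from Appendix~\ref{app:bc} applied to $\log|g|$ gives
$$
\sup_{\Disk(0,1)}\bigl|\log|g| - \log|g(0)|\bigr|\le C H_0 .
$$
This uses \eqref{G0} and the trivial bound $|B|\le 4^N$ on larger circles.

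\textbf{Step 2: effective degree $d$, the main obstacle.} The naive Cartan lemma applied to $B$ gives exponent $N\sim H_0$, but we want exponent $d$ with $H_0$ only in the constant. I would handle this by clustering. Apply Cartan's lemma to the zeros with parameter $h$. Outside a union of at most $N\le CH_0$ disks whose radii sum to at most $Ch$, each factor satisfies $|z-z_j|\ge$ the local scale. The key claim is that any cluster of zeros lying within distance $r$ of a real point $x$ contains at most $d+C H_1/\log(1/r)$ zeros. This comes from \eqref{G1}: for $r'\ll r$, Jensen on $\Disk(x,r')$ versus $\Disk(x,4)$ bounds the number of zeros in $\Disk(x,r)$ by
$$
\frac{H_1+d\log(1/r')}{\log(r'/r)}.
$$
Summing dyadic scales then gives
$$
|B(x)|\ge e^{-C(H_0+H_1)}h^{d}\cdot(\text{normalizing factor } \sup|B|)
$$
off the exceptional disks. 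Concretely, far zeros contribute only $e^{-CH_0}$, and near zeros contribute a product of at most $d$ effective small factors up to $e^{CH_1}$. Equivalently, I would aim for a direct estimate: for $x\in I_0$ outside the intervals $\Omega_h$, $\log|f(x)|\ge \log\sup_{I_0}|f| - C(H_0+H_1)-d\log(1/h)$. I would obtain it by applying Cartan to the zeros in $\Disk(x,1)$ only, using \eqref{G1} with $r=h$ to compare $\sup_{\Disk(x,h)}|f|$ with $\sup_{I_0}|f|$. Then $\Omega_h$ is the trace on $\R$ of the Cartan disks, which is at most $CH_0$ intervals of total length at most $Ch$. This yields the $\Omega_h$ statement.

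\textbf{Step 3: consequences.} The sublevel bound follows by choosing $h$ with $e^{-C(H_0+H_1)}h^d=\eps$. This gives $h=e^{C(H_0+H_1)/d}\eps^{1/d}$, and the claim is trivial if $h>1$. For the geometric-mean inequality, we integrate the distribution estimate. Since $|E\cap\{|f|\le \eps M\}|\le\min(\alpha, C' \eps^{1/d})$ with $M=\sup_{I_0}|f|$ and $C'=Ce^{C(H_0+H_1)/d}$, we get
$$
\frac1\alpha\int_E\log\frac{M}{|f|}\le \log\frac{(C'/\alpha)^{d}}{1}+Cd .
$$
This follows from the layer-cake formula $\int_0^\infty|\{\log(M/|f|)>s\}\cap E|\,ds$, splitting at $s_0=d\log(C'/\alpha)$. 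The tail contributes $\alpha\cdot Cd$ because $\int_{s_0}^\infty C'e^{-s/d}\,ds=d\alpha$. Exponentiating gives $M\le e^{C(H_0+H_1)}(C/\alpha)^dG_E(f)$, and the $\esssup$ version follows from $G_E(f)\le\esssup_E|f|$. Integrability of $\log|f|$ is automatic from the finitely many zeros of finite order.
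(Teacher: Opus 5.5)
There is a genuine gap in Step 2, at exactly the point you call the main obstacle. Your cluster bound is garbled as written: the denominator $\log(r'/r)$ is negative for $r'\ll r$. Once corrected, it says that the number $n(r)$ of zeros in $\Disk(x,r)$ satisfies $n(r)\le d+CH_1/\log(2/r)$. This follows by comparing the Blaschke product in $\Disk(x,4)$ over those zeros, evaluated on $\Disk(x,r)$, with \eqref{G1} at radius $r$.

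This per-scale information cannot be summed over dyadic scales to give $e^{-CH_1}$. Write $\delta_j=|\zeta_j-x|$ for the ordered local zeros. Then $\sum_{j>d}\log(1/\delta_j)=\int_0^1(n(r)-d)_+\,dr/r$. Your constraint permits $(n(r)-d)_+\approx H_1/\log(2/r)$ for all $e^{-H_1}\lesssim r\le 1$, so integrating it yields only $\sum_{j>d}\log(1/\delta_j)\lesssim H_1\log H_1$. That gives a factor $e^{-CH_1\log H_1}$. This is weaker than the theorem, and it could not be absorbed into $(C/\alpha)^{n-1}$ in Corollary \ref{cor:bs}, where $H_1\approx n$.

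The loss comes from discarding, at each scale, the Blaschke factors of zeros outside $\Disk(x,r)$. The paper avoids it by applying \eqref{G1} once, at $r=(1+\eta)\delta_{d+1}$, with the Blaschke product over all $M\le CH_1$ zeros in $\Disk(x,1)$. On $\Disk(x,r)$ the $d$ nearest factors are $\le Cr$, which cancels $r^{-d}$, and the others are $\le C\delta_j$. Hence $\prod_{j>d}\delta_j\ge e^{-H_1}C^{-M}\ge e^{-CH_1}$. Combined with the global bound $|f(x)|\ge e^{-CH_0}\prod_\zeta\min\{1,|x-\zeta|\}$, which your Step 1 essentially provides, only the $d$ nearest zeros remain, and Cartan turns them into $h^d$.

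Your ``direct estimate'' is a workable alternative, but you do not supply its one substantive step. \eqref{G1} with $r=h$ gives $\sup_{\Disk(x,h)}|f|\ge e^{-H_1}h^d\sup_{I_0}|f|$. Everything then rests on showing $|f(x)|\ge e^{-C(H_0+H_1)}\sup_{\Disk(x,h)}|f|$ for $x\notin\Omega_h$, and that can be proved as follows.
\begin{itemize}
\item Factor $f=B_xg_x$, where $B_x$ is the Blaschke product in $\Disk(x,4)$ over the $M\le CH_1$ zeros in $\Disk(x,3/2)$.
\item Harnack on $\Disk(x,3/2)$, together with \eqref{G1} at $r=1$, gives $|g_x(x)|\ge e^{-CH_1}\sup_{\Disk(x,4)}|g_x|$.
\item One has $|B_x(x)|/\sup_{\Disk(x,h)}|B_x|\ge C^{-M}\prod_j\delta_j/(h+\delta_j)$. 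This is $\ge e^{-C(N+M)}$ once $\delta_j\ge chj/N$, because $M\log(N/M)\le N$.
\end{itemize}
The lower bound $\delta_j\ge chj/N$ must come from a single Cartan covering of the global zero set in $\Disk(0,2)$, with $N\le CH_0$. Applying Cartan to ``the zeros in $\Disk(x,1)$ only'' gives disks that depend on $x$, and hence no single set $\Omega_h$. Completed this way, your route differs from the paper's: \eqref{G1} is used at scale $h$ rather than at scale $\delta_{d+1}$, and no reduction to the $d$ nearest zeros is needed. As written, though, it is only an assertion.

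A smaller point concerns Step 1. Jensen on $\Disk(z_0,3)$ gives no uniform count of zeros in $\overline{\Disk(0,2)}$, because such zeros can lie at distance nearly $3$ from $z_0$. Use a smaller disk, or compare $\Disk(0,1)$ with $\Disk(0,4)$ through a Blaschke product as the paper does. Step 3 is fine.
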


The main exponential-polynomial theorem is the following geometric-mean form.

\begin{theorem}[Geometric measurable Tur\'an-Nazarov estimate] \label{thm:gtn}
Let $m\ge1$, and let
$$
p(t) = \sum_{j = 1}^m c_j e^{\lambda_jt},
\quad \lambda_j\in\C,
$$
be a nonzero exponential polynomial of order at most $m$. After reducing the
representation, put
$$
M_{\Re} = \max_{\lambda\in\spec p} |\operatorname{Re}\lambda|.
$$
Let $I\subset\R$ be a bounded interval, and let $E\subset I$ be measurable
with $|E|>0$. Then
$$
\sup_I |p| \le
\exp(|I|M_{\Re})
\left(\frac{C|I|}{|E|}\right)^{m-1} G_E(p).
$$
The constant $C$ is absolute.
\end{theorem}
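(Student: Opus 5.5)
We argue by induction on $m$, after two reductions. First, by translating and rescaling $t$ we may take $I=I_0$. The scaling $t\mapsto |I|t$ multiplies the spectrum by $|I|$, so $M_{\Re}$ becomes $|I|M_{\Re}$. It also turns $|E|$ into $|E|/|I|$. Hence the claim reduces to
$$
\sup_{I_0}|p|\le e^{M_{\Re}}\Bigl(\frac{C}{\alpha}\Bigr)^{m-1}G_E(p),\qquad \alpha=|E|.
$$
Second, we write $p(t)=e^{\mu t}q(t)$ with $\mu$ a suitable spectral point, or the real centre of the real parts. This lets us assume the real parts lie in an interval of length at most $2M_{\Re}$ around $0$. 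The factor $e^{\mu t}$ has modulus bounded between $e^{-M_{\Re}/2}$ and $e^{M_{\Re}/2}$ on $I_0$, so its contribution to $\sup$ and to $G_E$ costs at most $e^{M_{\Re}}$. This is exactly the prefactor in the statement. From here on we prove the inequality for $q$ without exponential loss.

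Case 1 is the bounded spectral diameter case, $\rho=\diam(\spec q)\le K$ for a fixed absolute $K$. Here we verify the hypotheses \eqref{G0} and \eqref{G1} of Theorem \ref{thm:dg} for $f(z)=q(z)$, with $d=m-1$ and $H_0,H_1\le C m$. To do so we apply the classical interval Tur\'an inequality on segments through $x\in I_0$ in all complex directions. On a radius-$r$ interval it bounds $\sup|q|$ on $\Disk(x,4)$ by $(C/r)^{m-1}e^{C\rho}$ times the supremum on the segment. Theorem \ref{thm:dg} then gives $\sup_{I_0}|q|\le e^{Cm}(C/\alpha)^{m-1}G_E(q)$. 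The factor $e^{Cm}$ is absorbed into $C^{m-1}$, after treating $m=1$ separately, where $q$ is a constant times a unimodular exponential.

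Case 2 is $\rho>K$. Pick farthest points $a,b\in\spec q$ and set $Q_a=\rho^{-1}(D-a)q$ and $Q_b=\rho^{-1}(D-b)q$, each of order at most $m-1$. We apply the induction hypothesis in geometric-mean form to $Q_a$ and $Q_b$ on the same set $E$. We then need to compare $G_E(Q_a)$ with $G_E(q)$, and $\sup_{I_0}|q|$ with $\sup|Q_a|+\sup|Q_b|$, which holds via $|q|\le|Q_a|+|Q_b|$. For the first comparison we write
$$
\log G_E(Q_a)=\log G_E(q)+\frac1{|E|}\int_E\log|Q_a/q|.
$$
The weak distribution estimate for $Q_a/q$ away from the zero set of $q$ says that $|\{|Q_a/q|>s\}|\le C/s$. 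Here we need the zero count from the disk-growth consequence of Tur\'an, which gives $O(m)$ zeros, and the estimate must be applied to the correctly normalized operator. A weak-$L^1$ function $w$ satisfies $\frac1{|E|}\int_E\log^+ w\le \log(C/|E|)+C$. Hence $G_E(Q_a)\le (C/\alpha)G_E(q)$, which supplies exactly one extra factor $C/\alpha$ per step. Combining gives $(C/\alpha)^{m-2}\cdot(C/\alpha)$. The exponent $M_{\Re}$ for $Q_a$ does not grow, since its spectrum is a subset of $\spec q$. The constants must not compound, so the induction constant has to be fixed as $C^{m-1}$ with the per-step factor an absolute $2C$.

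The main obstacle is the weak logarithmic-derivative estimate for $Q_a/q$ on $I_0$ with an absolute constant independent of $m$ and $\rho$. I would try to prove it by a partial-fraction/Cartan decomposition of $q'/q$, writing it as $\sum 1/(t-z_j)$ plus a holomorphic part of size $O(\rho)$. The weak-$L^1$ bound for each Cauchy kernel then needs to sum to $O(\#\text{zeros}/\rho)$. The danger is that the zero count, of order $m+\rho$, must be matched by the normalization $\rho^{-1}$; keeping $m$-dependence out of this step is delicate.
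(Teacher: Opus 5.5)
\textbf{Genuine gap: the case split at an absolute threshold.} Your architecture matches the paper's: scaling to $I_0$, disk growth plus Theorem~\ref{thm:dg} for small diameter, the pruned pair $Q_a,Q_b$ with $|q|\le|Q_a|+|Q_b|$, and the geometric-mean induction on the unchanged set $E$. But splitting at an absolute $K$ makes Case~2 fail, and this is exactly the obstacle you leave open. The zero count in $\Disk(0,2)$ is $C(m+\rho)$, not $O(m)$. So the best weak bound for $Q_a/q$ is $C(m+\rho)/(\rho u)$, and when $K<\rho\ll m$ this is about $Cm/(\rho u)$, not $C/u$.

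This is not a weakness of the proof method; the bound is actually false there. Take $q(t)=\prod_{j=1}^{m-1}(e^{\delta t}-e^{\delta t_j})$ with well-separated $t_j\in I_0$ and $(m-1)\delta=2K$. It has exact order $m$, spectrum $\{0,\delta,\dots,(m-1)\delta\}$, and farthest points $0$ and $2K$. Near each $t_j$ one has $|Q_0/q|\approx 1/(2K|t-t_j|)$, so $|\{|Q_0/q|>u\}|\approx(m-1)/(Ku)$ for large $u$. For $E$ a union of short intervals around the $t_j$ with small $|E|=\alpha$, one gets $G_E(Q_0)\approx e(m-1)(K\alpha)^{-1}G_E(q)$. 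The per-step factor is then of size $m/\alpha$, and it compounds to roughly $m!\,(C/\alpha)^{m-1}$.

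The missing idea is to let the threshold grow with the order: split at $\rho\le A_0m$ versus $\rho>A_0m$. Your Case~1 argument survives unchanged. After factoring out $e^{\lambda_0t}$ with $\lambda_0\in\spec q$, one gets $H_0,H_1\le C(1+A_0)m$, and for $m\ge2$ the bound $e^{C(1+A_0)m}\le\bigl(e^{2C(1+A_0)}\bigr)^{m-1}$ is absorbed exactly as you absorb $e^{Cm}$. In Case~2 one now has $(m+\rho)/\rho\le 1+1/A_0$. So Lemma~\ref{lem:wld}, applied to $e^{-at}q$ at level $u\rho$, gives an absolute bound $C/u$, hence $G_E(Q_a)\le (C/\alpha)G_E(q)$ with $C$ independent of $m$.

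\textbf{Three smaller corrections.} First, your second reduction is false as stated: after removing $e^{\mu t}$ you cannot prove the bound for $q$ without exponential loss. For the already centred $q=e^{Mt}+e^{-Mt}$ and $E=[-1/4,1/4]$ one has $\sup_{I_0}|q|/G_E(q)\ge\frac12 e^{M/4}$. The factor $e^{M_{\Re}}$ must be part of the induction hypothesis. It does not compound, because $\spec Q_a,\spec Q_b\subset\spec q$ and the two branches are added. It is spent only in the small-diameter case, on the shift.

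Second, that shift must be by a complex spectral point. Lemma~\ref{lem:edg} controls growth by $\max_k|\mu_k|$, not by the diameter. A real-centred $q$ with spectrum $\{iT-\tfrac12,iT+\tfrac12\}$ satisfies \eqref{G0} only with $H_0\gtrsim T$; the imaginary shift costs nothing on $\R$.

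Third, for the Cauchy sum you cannot add $N$ separate weak-type bounds. Weak $L^1$ is not normable, and summing loses a factor $\log N$, which again is not absolute. Instead write $\sum_j (t-\zeta_j)^{-1}=H\mu+P$ with $\|\mu\|,\|P\|_{L^1}\le CN$, and apply the weak $(1,1)$ inequality once, as in Lemma~\ref{lem:cauchy}.
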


Since $G_E(p)\le\esssup_E|p|$, this immediately gives the usual measurable
form.

\begin{corollary}[Measurable Tur\'an-Nazarov estimate] \label{cor:tn}
Let $m\ge1$, and let
$$
p(t) = \sum_{j = 1}^m c_j e^{\lambda_jt},
\quad \lambda_j\in\C,
$$
be an exponential polynomial of order at most $m$. Let $I\subset\R$ be a
bounded interval, and let $E\subset I$ be measurable with $|E|>0$. If
$p\equiv0$, the conclusion is trivial. Otherwise, after reducing the
representation, put
$$
M_{\Re} = \max_{\lambda\in\spec p} |\operatorname{Re}\lambda|.
$$
Then
$$
\sup_I |p| \le
\exp(|I|M_{\Re})
\left(\frac{C|I|}{|E|}\right)^{m-1}
\esssup_E |p|.
$$
The constant $C$ is absolute.
\end{corollary}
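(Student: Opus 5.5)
The corollary follows at once from Theorem \ref{thm:gtn}; the only work is bookkeeping for the trivial case and for the inequality $G_E(p)\le\esssup_E|p|$.

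If $p\equiv0$, both sides vanish and there is nothing to prove.

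Otherwise, reduce the representation of $p$. Reducing does not change $p$ as a function, and it can only lower the order, so $\ord p\le m$. Thus $p$ satisfies the hypotheses of Theorem \ref{thm:gtn}, with the same $m$ and the same quantity $M_{\Re}$ computed from $\spec p$. Applying that theorem to $I$ and $E$ gives
$$
\sup_I|p|\le\exp(|I|M_{\Re})\left(\frac{C|I|}{|E|}\right)^{m-1}G_E(p),
$$
with the same absolute constant $C$.

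It remains to compare $G_E(p)$ with $\esssup_E|p|$. Put $S=\esssup_E|p|$.
\begin{itemize}
\item Since $p\not\equiv0$ is an exponential polynomial and $\overline I$ is compact, $p$ has only finitely many zeros on $\overline I$. Hence $\log|p|$ is defined off a finite, hence null, set of $E$.
\item We may assume $S<\infty$. This holds automatically because $p$ is continuous on the bounded interval $I$.
\item We have $S>0$: otherwise $p=0$ almost everywhere on $E$, which is impossible since $|E|>0$ and the zero set in $E$ is finite.
\item For almost every $t\in E$ we have $\log|p(t)|\le\log S$. Averaging over $E$ gives $\frac1{|E|}\int_E\log|p|\le\log S$.
\end{itemize}
The integral is well-defined: its positive part is bounded by $\log^+S$, and integrability of the negative part is the convention already fixed for exponential polynomials. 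Its value may be $-\infty$ only in the sense excluded by that convention, and in any case $G_E(p)\le S$. Exponentiating, $G_E(p)\le\esssup_E|p|$.

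Substituting this into the geometric-mean estimate yields the stated measurable bound. No step is a real obstacle: all the difficulty lies in Theorem \ref{thm:gtn}, and the one point needing care is that $\log|p|$ is a.e. finite and integrable on $E$, which follows from the finiteness of the zero set.
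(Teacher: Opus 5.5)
Your proposal is correct and is exactly the paper's argument: dispose of $p\equiv0$, apply Theorem \ref{thm:gtn}, and use $G_E(p)\le\esssup_E|p|$. Your extra bookkeeping is just a careful verification of that last inequality, which the paper states without comment. It covers finiteness of the zero set, $0<\esssup_E|p|<\infty$, and averaging $\log|p|\le\log\esssup_E|p|$ over $E$.
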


No sharpness is claimed for the absolute constant in the algebraic base. The
exponent $m-1$, however, is sharp in the usual sense, already on
intervals; see Remark \ref{rem:sharp}.

%%%%%%%%%%%%%%%%%%%%%%%%%%%%%%%%%%%%%%%%%%%
\section{A disk-growth Remez lemma} \label{sec:dg}

The proof of Theorem \ref{thm:dg} is divided into two compact
propositions. The first turns the two disk-growth assumptions into an effective
zero-product lower bound. The global disk-growth assumption controls the number
of zeros; the local disk-growth assumption controls how many of those zeros can
matter at a fixed real point.

The point of the next proposition is that the two growth assumptions play
different roles. The global bound \eqref{G0} gives a bound for the total
number of zeros in $\Disk(0,2)$. The local bound \eqref{G1}, with the
power $r^{-d}$, prevents more than $d$ zeros near a real point from
contributing without compensation. Thus the final lower bound involves only
the first $d$ nearest zeros, although the total number of zeros is controlled
by $H_0$.

\begin{proposition}[Disk growth implies an effective zero product] \label{prop:zp}
Let $f\not\equiv0$ be holomorphic in $\Disk(0,5)$, and assume
\eqref{G0} and \eqref{G1}. Normalize
$$
 \sup_{I_0}|f| = 1.
$$
Let $\mathcal Z$ be the multiset of zeros of $f$ in $\Disk(0,2)$, counted
with multiplicity, and put $N = |\mathcal Z|$. Then
$$
 N\le C H_0.
$$
Moreover, for every $x\in I_0$, if
$$
 |x-z_1(x)|\le\cdots\le |x-z_N(x)|
$$
is the ordering of $\mathcal Z$ by distance from $x$, then
$$
 |f(x)|
 \ge
 e^{-C(H_0+H_1)}
 \prod_{j = 1}^{\min\{d,N\}}
 \min\{1,|x-z_j(x)|\}.
$$
If $N = 0$, the product is interpreted as $1$.
\end{proposition}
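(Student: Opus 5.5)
Throughout we use the normalization $\sup_{I_0}|f|=1$. Then \eqref{G0} gives $\sup_{\Disk(0,4)}|f|\le e^{H_0}\sup_{\Disk(0,1)}|f|$, and \eqref{G1} at $x=0$, $r=1$, together with $H_1\ge d$, gives $\sup_{\Disk(0,1)}|f|\le e^{H_1}\sup_{\Disk(0,4)}|f|$. More useful is the following consequence of \eqref{G1}. Take any $x\in I_0$ with $|f(x)|=1$ (it exists by compactness). Then $\sup_{\Disk(0,3)}|f|\le\sup_{\Disk(x,4)}|f|\le e^{H_1}\sup_{\Disk(x,1)}|f|$. Combined with \eqref{G0}, this shows $\log\sup_{\Disk(0,4)}|f|\le H_0+H_1+\log\sup_{\Disk(0,1)}|f|$. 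Also $\sup_{\Disk(0,1)}|f|\ge 1$, since $I_0\subset\Disk(0,1)$.

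\textbf{Zero count.} We apply Jensen's formula centred at a point $x_0\in I_0$ with $|f(x_0)|=1$, on the disk $\Disk(x_0,3)$, which is contained in $\Disk(0,4)$ and contains $\Disk(0,2)$ with room to spare. Jensen gives that the number of zeros in $\Disk(x_0,5/2)\supset\Disk(0,2)$ is at most
$$C\bigl(\log\sup_{\Disk(x_0,3)}|f|-\log|f(x_0)|\bigr)\le C\log\sup_{\Disk(0,4)}|f|.$$
Since $\sup_{\Disk(0,1)}|f|$ may exceed $1$, we must bound it by $e^{CH_0}$. For this, use \eqref{G0} plus the two-constants/Hadamard-type argument on the segment: a holomorphic function bounded by $e^{H_0}M$ on $\Disk(0,4)$, with $M=\sup_{\Disk(0,1)}|f|$, and bounded by $1$ on $I_0$, satisfies $M\le e^{CH_0}$. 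This follows from harmonic measure of $I_0$ in $\Disk(0,4)$ being $\ge c>0$ on $\overline{\Disk(0,1)}$: $\log|f|\le (1-c)(H_0+\log M)$ on $\Disk(0,1)$, so $\log M\le CH_0$. Hence $N\le CH_0$, and also $\log\sup_{\Disk(0,4)}|f|\le CH_0$. I expect the proof to proceed without using \eqref{G1} for this part.

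\textbf{Lower bound at $x$.} Write $f=Bg$ on $\Disk(0,2)$, where $B(z)=\prod_{z\in\mathcal Z}(z-z_j)/(2-\bar z_j z/2)$ (scaled Blaschke factors), and $g$ is zero-free there. By the appendix's Blaschke/Harnack lemma, $\log|g|$ is harmonic, with $\sup_{\Disk(0,2)}\log|g|\le CH_0$ (by the maximum principle, since $|B|\ge c^{N}$ on a circle of radius close to $2$, using $N\le CH_0$). Also $\log|g(x_0)|\ge -CH_0$. Harnack on $\Disk(0,3/2)$ then gives $\log|g(x)|\ge -CH_0$ for all $x\in I_0$. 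Thus
$$|f(x)|\ge e^{-CH_0}\prod_{j=1}^N|x-z_j(x)|/C.$$
The factors with $j\le d$ are as claimed. The issue is the factors with $j>d$, whose product could be as small as $\prod|x-z_j|$ with many zeros clustering near $x$.

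\textbf{Main obstacle: using \eqref{G1} to discard $j>d$.} Let $r=\min\{1,|x-z_{d+1}(x)|\}/2$ (if $N\le d$ we are done). On $\Disk(x,r)$, the zeros $z_j$ with $j>d$ are at distance $\ge r$ from $x$, and \eqref{G1} says $\sup_{\Disk(x,r)}|f|\ge e^{-H_1}r^{d}\sup_{\Disk(x,4)}|f|\ge e^{-H_1}r^d$. Rather than the naive product, apply the factorization on $\Disk(x,r)$ relative to a point of near-maximal modulus. We write $f=P\cdot h$ on $\Disk(x,2r)$, where $P=\prod_{j\le d}(z-z_j)$ collects only zeros within distance $<r$ of $x$ (at most $d$ of them; the others have distance $\ge 2r$ by the choice of $r$, after adjusting by a dyadic pigeonhole so that an annulus $\{r\le|z-x|\le 2r\}$ is zero-free up to a factor). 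Then $h$ is zero-free on $\Disk(x,r)$, and $\sup_{\Disk(x,r)}|h|\ge e^{-H_1}r^d/\sup|P|\ge e^{-H_1}r^d/(2r)^d$, while $\sup_{\Disk(x,2r)}|h|\le e^{CH_0}r^{-d}$ (by the maximum principle on the circle of radius $2r$ where $|P|\ge r^d$). Harnack on the zero-free disk then gives $\log|h(x)|\ge -C(H_0+H_1+d)$. Since $H_1\ge d$, this yields $|f(x)|\ge e^{-C(H_0+H_1)}\prod_{j\le d}|x-z_j(x)|$. The delicate point is arranging zero-free room around radius $r$. I would pick the radius among $\sim N$ dyadic-ish choices in $[|x-z_d|,|x-z_{d+1}|]$ or use Cartan-type gap selection from the appendix, so that losses are $e^{CN}=e^{CH_0}$, which is affordable.
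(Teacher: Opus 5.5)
Your first two steps are sound. The zero count via Jensen at a point $x_0\in I_0$ with $|f(x_0)|=1$, plus the two-constants bound $\sup_{\Disk(0,1)}|f|\le e^{CH_0}$, is correct. The paper gets $N\le CH_0$ more directly, from Lemma \ref{lem:pb} as $1\le(6/7)^Ne^{H_0}$. Your bound $|f(x)|\ge e^{-CH_0}\prod_{\zeta\in\mathcal Z}\min\{1,|x-\zeta|\}$ is exactly the paper's \eqref{eq:allz}.

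The gap is in discarding the factors with $j>d$, and there are two problems there.
\begin{itemize}
\item Your Harnack step does not give what you claim. From $\sup_{\Disk(x,r)}|h|\ge e^{-H_1}2^{-d}$ and $\sup_{\Disk(x,2r)}|h|\le e^{CH_0}r^{-d}$, Harnack in $\Disk(x,2r)$ (constant $3$) only yields $\log|h(x)|\ge 3\log\sup_{\Disk(x,r)}|h|-2\log\sup_{\Disk(x,2r)}|h|\ge -C(H_0+H_1+d)-2d\log(1/r)$. The factor $r^{2d}$ cannot be absorbed into $e^{-C(H_0+H_1)}$. For example, take $f(z)=z^d(z-i\delta)$ with $\log(1/\delta)\approx d$. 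Then $H_0+H_1=O(d)$, but $r\approx\delta/2$ at real $x$ near $0$, so $d\log(1/r)\approx d^2$.
\item Your upper bound presupposes that $z_1,\ldots,z_d\in\Disk(x,r)$ and that the annulus $\{r\le|z-x|<2r\}$ is zero-free. This need not hold, e.g.\ when $|x-z_d|=|x-z_{d+1}|$. The pigeonhole does not fix it with linear losses. An annulus outside $[|x-z_d|,|x-z_{d+1}|]$ either puts zeros with $j>d$ into $P$, reintroducing exactly the factors you want to drop, or leaves fewer than $d$ zeros inside, costing further powers of $r$. An annulus of ratio only $1+1/M$ makes the Harnack constant of order $M\le CH_1$, so the exponent becomes of order $H_1(H_0+H_1)$.
\end{itemize}
You also need $|x-z_{d+1}|>0$, which must be derived from \eqref{G1}: a zero of multiplicity $>d$ at $x$ would force $r^{-d}\sup_{\Disk(x,r)}|f|\to0$.

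The missing idea is a direct lower bound for the tail itself. Let $\zeta_1,\ldots,\zeta_M$ be the zeros in $\Disk(x,1)$, ordered by distance to $x$; then $M\le CH_1$, by \eqref{G1} with $r=1$ and a Blaschke product in $\Disk(x,4)$. The paper shows $\prod_{j>d}|\zeta_j-x|\ge e^{-CH_1}$ as follows. Apply \eqref{G1} at $r_\eta=(1+\eta)|\zeta_{d+1}-x|$, and estimate on $\Disk(x,r_\eta)$ the Blaschke product in $\Disk(x,4)$ over all local zeros. Each of the first $d$ factors is at most $Cr_\eta$, and each later one is at most $C|\zeta_j-x|$. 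Hence $e^{-H_1}r_\eta^d\le C^Mr_\eta^d\prod_{j>d}|\zeta_j-x|$, and the power $r_\eta^d$ cancels exactly. Combined with your full-product bound, this proves the proposition.

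Alternatively, your own local scheme closes, with no annulus and no pigeonhole, if you bound $h=f/P$, $P=\prod_{j\le d}(z-z_j)$, from above pointwise rather than on the circle of radius $2r$.
\begin{itemize}
\item By the proof of Lemma \ref{lem:pb}, $|f|\le|B_d|\sup_{\Disk(x,4)}|f|$, where $B_d$ is the Blaschke product in $\Disk(x,4)$ over $z_1,\ldots,z_d$. Hence $|h|\le(4/15)^d\sup_{\Disk(x,4)}|f|$ on $\Disk(x,1)$.
\item Take $\varrho=|x-z_{d+1}|<1$ and $r=\varrho/2$. Since $\sup_{\Disk(x,r)}|P|\le(3r)^d$, \eqref{G1} gives $\sup_{\Disk(x,r)}|h|\ge e^{-H_1}3^{-d}\sup_{\Disk(x,4)}|f|$.
\item Harnack on the zero-free disk $\Disk(x,\varrho)$ then gives $|h(x)|\ge e^{-CH_1}$, using $\sup_{\Disk(x,4)}|f|\ge1$ and $d\le H_1$.
\end{itemize}
If $|x-z_{d+1}|\ge1$, run the same argument with $P$ taken over the zeros in $\Disk(x,1)$ and $\varrho=1$.
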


The proof is given in Appendix \ref{app:zp}. It is a standard
Blaschke-product argument, together with one Harnack estimate for a zero-free
quotient and a local tail-product estimate.

The second proposition is the real-variable Cartan/Remez step. It says that a
lower bound by the first $d$ nearest zeros already contains the full
sublevel-set and Remez information.

\begin{proposition}[Product Cartan-Remez lemma] \label{prop:cr}
Let $F$ be a bounded measurable function on $I_0$ with
$\sup_{I_0}|F| = 1$. Let $w_1,\ldots,w_N\in\C$, with repetitions allowed,
and let $d\ge1$ be an integer. For $x\in I_0$, let
$$
 d_1(x)\le\cdots\le d_N(x)
$$
be the numbers $|x-w_1|,\ldots,|x-w_N|$ arranged in nondecreasing order. If
$N = 0$, all products below are empty. Assume that, for some $S\ge0$,
\begin{equation} \label{eq:prod}
 |F(x)|
 \ge
 e^{-S}
 \prod_{j = 1}^{\min\{d,N\}}\min\{1,d_j(x)\},
\quad x\in I_0.
\end{equation}
The lower bound implies that $F$ can vanish only at the real points among
$w_1,\ldots,w_N$. After redefining $\log|F|$ on this finite set, the
same lower bound shows that $\log|F|\in L^1(E)$ for every measurable
$E\subset I_0$. We define $G_E(F)$ with this harmless convention.

Then, for every $0<h\le1$, there exists $\Omega_h\subset I_0$,
representable as a union of at most $N$ intervals, such that
$$
 |\Omega_h|\le Ch
$$
and
$$
 |F(x)|\ge e^{-C(S+N)}h^d,
\quad x\in I_0\setminus\Omega_h.
$$
Consequently, for $0<\eps\le1$,
$$
 \left|\{x\in I_0: |F(x)|\le\eps\}\right|
 \le
 C\exp\left(C\frac{S+N}{d}\right)\eps^{1/d}.
$$
Finally, if $E\subset I_0$ is measurable with $|E| = \alpha>0$, then
$$
 1
 \le
 e^{C(S+N)}
 \left(\frac C\alpha\right)^d
 G_E(F).
$$
\end{proposition}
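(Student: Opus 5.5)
The proof is a one-dimensional Cartan-type covering argument. Its key feature is that only the $d$ nearest points $w_j$ enter the product. Write $u_j=\Re w_j$, and note that $|x-w_j|\ge|x-u_j|$ for real $x$. Hence it suffices to work with the real projections: a lower bound for $\prod_{j\le\min\{d,N\}}\min\{1,|x-u_{(j)}|\}$, taken over the $d$ projections nearest to $x$, gives the same bound for the original product.

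For the exceptional set, fix $0<h\le1$ and let $\Omega_h$ be the union of the intervals $[u_j-\delta_j,u_j+\delta_j]\cap I_0$ for $j=1,\dots,N$. This is a union of at most $N$ intervals. The radii are chosen summable, for instance $\delta_j=h\,2^{-k_j}$, so that $|\Omega_h|\le Ch$. A uniform radius $h/N$ would lose a factor $N^{-d}$. That is affordable, since $N^d\le e^{dN}$, but it is not $e^{CN}$. So I would use a Cartan-style radius assignment instead. Off $\Omega_h$, I need each of the $d$ nearest distances to be $\gtrsim h\,e^{-C N/d}$, or more precisely their product to be $\ge e^{-CN}h^d$.

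The cleanest route is the following counting lemma. For $x\notin\Omega_h$ with $\Omega_h=\{x:\ \exists r\in(0,1],\ \#\{j:|x-u_j|<r\}\ge \max(1,N r/h)\cdot\text{(const)}\}$, a maximal-function (Vitali/rising sun) covering gives $|\Omega_h|\le Ch$, with $\Omega_h$ a union of at most $N$ intervals. For $x$ outside this set, the number of $u_j$ within distance $r$ of $x$ is at most $CNr/h$ for every $r$ with $Nr/h\ge1$, and there are none within distance $h/(CN)$. Ordering the distances then gives $d_k(x)\ge kh/(CN)$. Therefore
\[
\prod_{k\le d}\min\{1,d_k\}\ge \prod_{k\le d}\frac{kh}{CN}\ge h^d\frac{d!}{(CN)^d}.
\]
When $d\ge N$, the product runs only over $k\le N$, and we get $\ge h^N N!/(CN)^N\ge h^d e^{-C'N}$ using $h\le1$. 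When $d<N$, the factor $d!/N^d=e^{-d\log(N/d)+O(d)}$ is at least $e^{-CN}$, since $d\log(N/d)\le N$. So $|F|\ge e^{-C(S+N)}h^d$ off $\Omega_h$. Getting the counting argument and this arithmetic right is the main obstacle.

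The sublevel bound follows by choosing $h$ with $e^{-C(S+N)}h^d=\eps$, that is $h=e^{C(S+N)/d}\eps^{1/d}$. If $h>1$, the bound is trivial because $|I_0|=1\le$ the right-hand side.

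For the geometric mean, integrate $\log|F|$ over $E$ using the layer-cake formula:
\[
\frac1\alpha\int_E\log\frac1{|F|}\le \log\frac1{\eps_0}+\frac1\alpha\int_{\eps_0\text{-sublevel}}\log\frac1{|F|}.
\]
Here $\eps_0$ is chosen so that the sublevel set has measure $\le\alpha/2$, i.e. $\eps_0=(c\alpha)^d e^{-C(S+N)}$. The tail term is controlled by $\frac1\alpha\int_0^\infty |\{|F|\le \eps_0e^{-s}\}|\,ds\le \frac1\alpha\int_0^\infty (\alpha/2)e^{-s/d}\,ds\le d/2$. This gives $\log(1/G_E(F))\le C(S+N)+d\log(C/\alpha)$, which is the claim. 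Integrability of $\log|F|$ follows similarly from the product lower bound, since each $\log|x-u_j|$ is locally integrable.
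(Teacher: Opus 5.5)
Your proposal is correct in substance. For the covering step it takes a different route from the paper.

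The paper applies its planar Cartan lemma (Lemma~\ref{lem:cartan}) directly to the complex points $w_j$. Disks of radii $r_j=hj/(4N)$ containing $j$ labelled points are selected greedily from level $j=N$ down to $j=1$. The selected disks carry disjoint label sets, so there are at most $N$ of them with total radius $O(h)$, and outside their enlargements $d_j\ge hj/(4N)$. Intersecting with $I_0$ gives $\Omega_h$, and the interval count comes for free.

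You instead project to the real axis first, using $|x-w_j|\ge|x-\operatorname{Re}w_j|$ for real $x$; this correctly passes to the ordered distances. You then get the same conclusion $d_k(x)\ge ckh/N$ from the weak-type estimate for the maximal function of $\mu=\sum_j\delta_{u_j}$. This replaces the greedy selection with a standard one-dimensional tool. After that, your Stirling arithmetic, the choice of $h$ in the sublevel bound, and the layer-cake estimate for $G_E(F)$ match the paper's.

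The price is that the interval count is no longer automatic, and for the set you define (via \emph{centered} intervals) it is false. A point midway between two of the $u_j$ can qualify by counting both, while points closer to either one qualify for neither. For example, take the constant equal to $1$, $N=2$ and $u_{1,2}=\pm\frac{9}{10}h$. Your set is then $\{d_1<h/2\}\cup\{d_2<h\}=(-\frac{14}{10}h,-\frac{4}{10}h)\cup(-\frac{1}{10}h,\frac{1}{10}h)\cup(\frac{4}{10}h,\frac{14}{10}h)$, which has three components.

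The repair is easy. Let $\Omega_h$ be the union, intersected with $I_0$, of all witness intervals $J=(x-r,x+r)$.
\begin{itemize}
\item Each $J$ contains some $u_j$, so every component contains one and there are at most $N$ components.
\item Each $J$ satisfies $\mu(J)\ge cN|J|/(2h)$, so the Vitali covering lemma still gives $|\Omega_h|\le Ch$.
\item The lower bound persists off this larger set, since it contains your original one.
\end{itemize}

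There are also two minor slips.
\begin{itemize}
\item Choosing $e^{-C(S+N)}h^d=\eps$ only gives $|F|\ge\eps$ off $\Omega_h$. For a general measurable $F$, the set $\{|F|=\eps\}$ may have positive measure, so use $2\eps$ instead; the paper achieves this through its large constant $B$.
\item In the layer-cake step, $\int_{\{|F|\le\eps_0\}}\log(1/|F|)$ exceeds $\int_0^\infty|\{|F|\le\eps_0e^{-s}\}|\,ds$ by $|\{|F|\le\eps_0\}|\log(1/\eps_0)\le\frac{\alpha}{2}\log(1/\eps_0)$. This is harmless but should be accounted for.
\end{itemize}
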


\begin{proof}
Apply the Cartan covering lemma, Lemma \ref{lem:cartan}, to the
points $w_1,\ldots,w_N$. For $0<h\le1$, it gives at most $N$ disks
whose total radii are at most $Ch$ and outside which
$$
 \prod_{j = 1}^{\min\{d,N\}}\min\{1,d_j(x)\}
 \ge e^{-CN}h^d.
$$
Intersecting the disks with $I_0$ gives a union $\Omega_h$ of at most $N$
intervals with $|\Omega_h|\le Ch$. Combining the last display with
\eqref{eq:prod} gives
$$
 |F(x)|\ge e^{-C(S+N)}h^d,
\quad x\in I_0\setminus\Omega_h.
$$

Put $A = C(S+N)$. For a sufficiently large absolute constant $B$, choose
$$
 h = B\exp\left(B\frac{S+N}{d}\right)\eps^{1/d}.
$$
If $h\ge1$, then the claimed sublevel bound is trivial after enlarging the
absolute constant. If $h<1$, then outside $\Omega_h$ we have
$|F|>\eps$, provided $B$ is chosen larger than the implicit constant in
$A$. Hence the sublevel set is contained in $\Omega_h$, which proves the
sublevel estimate.

For the geometric-mean estimate, set
$$
 Y(x) = \log\frac1{|F(x)|}
$$
where $F(x)\ne0$. The lower product bound implies that the zero set of
$F$ is contained in the finite set of real points among the $w_j$'s. After
changing $Y$ on this finite set, the same lower bound also implies that $Y$ is
integrable. By the distribution formula and the sublevel estimate,
$$
\begin{aligned}
 \frac1\alpha\int_E Y(x) dx
 &\le
 \int_0^\infty
 \min\left\{1,
 \frac C\alpha
 \exp\left(C\frac{S+N}{d}\right)e^{-s/d}
 \right\} ds \\
 &\le C(S+N)+d\log\frac C\alpha,
\end{aligned}
$$
after changing $C$. Since
$$
 G_E(F) = \exp\left(-\frac1\alpha\int_EY\right),
$$
we obtain
$$
 1\le e^{C(S+N)}\left(\frac C\alpha\right)^dG_E(F).
$$
This completes the proof.
\end{proof}

\begin{proof}[Proof of Theorem \ref{thm:dg}]
Since $f\not\equiv0$, the identity theorem gives $\sup_{I_0}|f|>0$. By
homogeneity, normalize $\sup_{I_0}|f| = 1$. Proposition
\ref{prop:zp} gives a multiset $\mathcal Z$ of zeros in
$\Disk(0,2)$, with $N\le C H_0$, and a product lower bound of the form
\eqref{eq:prod} with
$$
 S = C(H_0+H_1).
$$
Applying Proposition \ref{prop:cr} gives the Cartan covering,
the sublevel estimate, and
$$
 1
 \le
 e^{C(H_0+H_1)}
 \left(\frac C\alpha\right)^d
 G_E(f).
$$
Undoing the normalization gives the geometric-mean Remez estimate. The
measurable estimate follows from $G_E(f)\le\esssup_E|f|$.
\end{proof}

%%%%%%%%%%%%%%%%%%%%%%%%%%%%%%%%%%%%%%%%%%%
\section{Bounded spectral diameter}

The bounded-spectrum part uses only the following classical interval form of
Tur\'an's first main theorem for exponential sums. We record it as an input.
This is the only Tur\'an-type input in the paper; in particular, we do not use
the measurable Tur\'an-Nazarov theorem.

\begin{theorem}[Classical interval Tur\'an inequality] \label{thm:turan}
Let $p\not\equiv0$ be an exponential polynomial of order at most $m$. After
reducing the representation, write
$$
 p(t) = \sum_{\lambda\in\Lambda} c_\lambda e^{\lambda t},
\quad |\Lambda|\le m,
$$
and put
$$
 M = \max_{\lambda\in\Lambda} |\operatorname{Re}\lambda|.
$$
If $J\subset K\subset\R$ are bounded intervals and $|J|>0$, then
$$
 \sup_K |p|
 \le
 \exp(|K|M)
 \left(\frac{C|K|}{|J|}\right)^{m-1}
 \sup_J |p|.
$$
The constant $C$ is absolute.
\end{theorem}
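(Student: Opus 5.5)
This statement is classical (Tur\'an's lemma in the form refined by Nazarov), and the paper only records it as an input. Still, here is how I would prove it. The plan is to discretize on an arithmetic progression inside $J$ and apply Tur\'an's first main theorem for power sums. That theorem says: if $z_1,\ldots,z_n\in\C$ satisfy $\min_j|z_j|\ge1$, then for all coefficients $b_j$ and all $\nu\ge0$,
$$
\Bigl|\sum_j b_j\Bigr| \le \Bigl(\frac{C(\nu+n)}{n}\Bigr)^{n-1}\max_{\nu+1\le k\le \nu+n}\Bigl|\sum_j b_jz_j^k\Bigr|.
$$

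\textbf{Normalization.} By translation and scaling $t\mapsto |J|t$ we may assume $J=[0,1]$. Scaling multiplies exponents by $|J|$ and turns $|K|M$ into $(|K|/|J|)\cdot|J|M$, so the quantity $|K|M$ is invariant. Set $n=|\Lambda|\le m$; increasing $n$ to $m$ only weakens the bound since $C|K|/|J|\ge1$. Fix a point $x\in K$ with, say, $x\le 0$; the case $x\ge 1$ is symmetric after $t\mapsto -t$, and the case $x\in J$ is trivial.

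\textbf{Discretization.} Put $\delta=1/n$ and consider the sampled sequence
$$
u_k=p(x+k\delta)=\sum_\lambda c_\lambda e^{\lambda x}\,w_\lambda^k,\qquad w_\lambda=e^{\lambda\delta}.
$$
This is a power sum with nodes $w_\lambda$, and $u_0=p(x)$. Choose $\nu$ so that the indices $\nu+1,\ldots,\nu+n$ land in $J$, i.e.\ $\nu\approx |x|/\delta\le n|K|$. Then $(\nu+n)/n\lesssim |K|/|J|+1\lesssim |K|/|J|$.

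\textbf{Removing the modulus condition.} The nodes do not satisfy $|w_\lambda|\ge1$ in general. To fix this, multiply $p$ by $e^{Mt}$. This shifts every exponent to have real part in $[0,2M]$, so the new nodes satisfy $|w_\lambda|\ge1$.

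\textbf{Bookkeeping of the exponential factor.} The multiplier changes the value at $x$ by $e^{Mx}$ and the values on $J$ by at most $e^{M\cdot\sup J}$. The resulting distortion is $e^{M\,\mathrm{dist}}\le e^{M|K|}$, which is exactly the factor allowed in the statement. Applying Tur\'an's theorem then gives
$$
|p(x)|\le e^{|K|M}\bigl(C|K|/|J|\bigr)^{n-1}\sup_J|p|.
$$
Taking the supremum over $x\in K$ finishes the proof.

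\textbf{Main obstacle.} The substantive step is Tur\'an's first main theorem itself. I would prove it through the generating-function / Newton-identity argument. Represent $b_j$ via a polynomial $P(z)=\prod_j(z-z_j)$, and find a polynomial $R$ of degree $\nu+n$ vanishing to order $\nu$ at $0$ with $R\equiv1$ modulo $P$ at the nodes. This yields an identity $\sum_j b_j=\sum_{k=\nu+1}^{\nu+n}a_k s_k$ with $\sum_k|a_k|\le (C(\nu+n)/n)^{n-1}$. Bounding these coefficients is the point where the sharp exponent $n-1$ and the absolute constant arise. If this route becomes too heavy, I would fall back on Nazarov's induction on $n$ via $g=(D-\lambda)p$. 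That induction uses a Rolle-type mean-value estimate for complex exponentials, applied separately to the real and imaginary parts, and loses only an absolute constant per step.
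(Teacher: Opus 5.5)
Your proof is correct and follows the route the paper points to. The paper gives no proof of this statement; it only cites it as the usual interval form of Tur\'an's first main theorem. Your reduction checks out as the standard derivation of that form from the power-sum theorem: rescale to $J=[0,1]$, multiply by $e^{Mt}$ so the nodes $e^{(\lambda+M)/n}$ have modulus at least $1$, sample with step $1/n$ from $x$ into $J$ with $\nu=\lfloor n|x|\rfloor$ and $(\nu+n)/n\le|K|/|J|$, and absorb $e^{M(1-x)}\le e^{M|K|}$.

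Two small points in the power-sum step. First, you need the sharp (Makai--de Bruijn) form with exponent $n-1$; a version with exponent $n$ would only give $m$ in place of $m-1$. Second, your $R$ must vanish to order $\nu+1$ at $0$, not order $\nu$, so that only $z^{\nu+1},\dots,z^{\nu+n}$ occur. With that fix, write $R=z^{\nu+1}S$, where $S$ is the Newton-form interpolant of $z^{-\nu-1}$ at the nodes. Using $|z_j|\ge1$ this gives $\sum_k|a_k|\le\sum_{k=0}^{n-1}\binom{\nu+k}{k}2^k\le\bigl(C(\nu+n)/n\bigr)^{n-1}$, which is the coefficient bound your sketch left unproved.
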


This is the usual interval form of Tur\'an's first main theorem for exponential
sums; see \cite{Turan,Turan1984,MitrinovicPecaricFink}. In this note it is the
only Tur\'an-type input. It enters the argument through Lemma \ref{lem:edg};
therefore the disk zero-counting lemma in Appendix \ref{app:wld}, which uses
Lemma \ref{lem:edg}, also ultimately depends on this same classical input.

\begin{lemma}[Interval Tur\'an implies disk growth] \label{lem:edg}
Let
$$
 q(z) = \sum_{k = 1}^m a_k e^{\mu_kz}
$$
be a nonzero exponential polynomial of order at most $m$, and put
$$
 \sigma = \max_k |\mu_k|.
$$
Then, for every $z_0\in\C$ and every $0<r<R$,
$$
 \sup_{\Disk(z_0,R)}|q|
 \le
 \exp(2R\sigma)
 \left(\frac{CR}{r}\right)^{m-1}
 \sup_{\Disk(z_0,r)}|q|.
$$
\end{lemma}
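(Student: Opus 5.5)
Our plan is to reduce the disk estimate to the interval Tur\'an inequality (Theorem \ref{thm:turan}) applied along real lines through the disks. Fix $z_0$ and $0<r<R$, and let $w\in\Disk(z_0,R)$ be arbitrary. Consider the complex line through $z_0$ and $w$, parametrized as $\ell(t)=z_0+te^{i\theta}$ with $t\in\R$, where $e^{i\theta}$ is the unit direction from $z_0$ to $w$ (any direction if $w=z_0$). The restriction
$$
 \tilde q(t)=q(z_0+te^{i\theta})=\sum_{k}\bigl(a_ke^{\mu_kz_0}\bigr)e^{(\mu_ke^{i\theta})t}
$$
is an exponential polynomial in the real variable $t$ of order at most $m$. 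Its exponents $\mu_ke^{i\theta}$ satisfy $|\operatorname{Re}(\mu_ke^{i\theta})|\le|\mu_k|\le\sigma$, so the quantity $M$ of Theorem \ref{thm:turan} for $\tilde q$ is at most $\sigma$. This bound survives the reduction of the representation, since reduction only merges or drops exponents. If $\tilde q\equiv0$, then $q(w)=0$ and there is nothing to prove. Otherwise we may apply Theorem \ref{thm:turan}.

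We take $K=[-R,R]$, so that $|K|=2R$, and $J=[-r,r]$, so that $|J|=2r$ and $J\subset K$. Theorem \ref{thm:turan} then gives
$$
 |q(w)|\le\sup_K|\tilde q|\le \exp(2R\sigma)\Bigl(\frac{CR}{r}\Bigr)^{m-1}\sup_J|\tilde q|.
$$
The segment $\ell(J)$ lies in the closed disk $\overline{\Disk(z_0,r)}$, and by continuity $\sup_J|\tilde q|\le\sup_{\Disk(z_0,r)}|q|$. Taking the supremum over $w\in\Disk(z_0,R)$ proves the lemma, with the same absolute constant $C$ as in Theorem \ref{thm:turan} up to the factor $2R/2r=R/r$. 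The exponential factor $\exp(2R\sigma)$ matches the statement exactly.

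There is no real obstacle here. The only points to check are bookkeeping ones: the rotated exponents keep $|\operatorname{Re}|\le\sigma$; reducing the representation of $\tilde q$ does not increase the order or $M$; and the degenerate case $\tilde q\equiv0$ on a line is handled separately. That case is consistent because it forces $q(w)=0$.
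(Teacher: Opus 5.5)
Your proposal is correct and follows essentially the same route as the paper: restrict $q$ to the real line through $z_0$ in the direction of the given point, observe that the rotated exponents have real parts bounded by $\sigma$, and apply the interval Tur\'an inequality with $J=[-r,r]\subset K=[-R,R]$. The only differences are minor and harmless. You dispose of the degenerate case directly, since $q(w)=0$ there, whereas the paper shows by analytic continuation that the restriction is nonzero. Your continuity remark for the endpoints $s=\pm r$ makes explicit a point the paper leaves implicit.
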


\begin{proof}
Fix $z\in\Disk(z_0,R)$. If $z = z_0$, choose any $|\theta| = 1$ and put
$s_0 = 0$; otherwise write $z = z_0+s_0\theta$, with $0<s_0<R$ and
$|\theta| = 1$. For real $s$, define
$$
 Q(s) = q(z_0+s\theta).
$$
Then $Q$ is an exponential polynomial in the real variable $s$, with
exponents $\theta\mu_k$, and
$$
 |\operatorname{Re}(\theta\mu_k)|\le\sigma.
$$
Moreover $Q\not\equiv0$; otherwise $q$ would vanish on a line segment, and
analytic continuation would give $q\equiv0$.
Apply Theorem \ref{thm:turan} to $[-r,r]\subset[-R,R]$. Since
$z_0+s\theta\in\Disk(z_0,r)$ for $|s|\le r$, we get
$$
 |q(z)| = |Q(s_0)|
 \le
 \exp(2R\sigma)
 \left(\frac{CR}{r}\right)^{m-1}
 \sup_{\Disk(z_0,r)}|q|.
$$
Taking the supremum over $z\in\Disk(z_0,R)$ proves the lemma.
\end{proof}

We shall also use the following elementary bookkeeping lemma for multiplication
by a single exponential factor.

\begin{lemma}[Single exponential factor] \label{lem:exp}
Let $E\subset I_0$ be measurable with $|E|>0$, let $p$ be a nonzero
measurable function on $I_0$ with $\log |p|\in L^1(E)$, and let
$\lambda_0\in\C$. Put
$$
 F(t) = e^{-\lambda_0t}p(t).
$$
Then
$$
 \sup_{I_0}|p|
 \le
 e^{|\operatorname{Re}\lambda_0|}
 \frac{\sup_{I_0}|F|}{G_E(F)}G_E(p).
$$
In particular,
$$
 \sup_{I_0}|e^{\lambda t}|
 \le
 e^{|\operatorname{Re}\lambda|}G_E(e^{\lambda t}).
$$
\end{lemma}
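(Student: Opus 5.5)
The plan is to compare $p$ and $F$ pointwise on $I_0$ through the modulus of the exponential factor, and to track the same factor inside the geometric means. Write $a = \operatorname{Re}\lambda_0$, so that $|e^{\lambda_0 t}| = e^{at}$ and $|p(t)| = e^{at}|F(t)|$. We have $\log|F| = \log|p| - at$, and $at$ is bounded on $I_0$. Hence $\log|F|\in L^1(E)$ exactly when $\log|p|\in L^1(E)$, so both $G_E(F)$ and $G_E(p)$ are well defined, positive and finite, with the same finite-set convention.

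First, the sup bound: for $t\in I_0$ we have $|t|\le 1/2$, so
$$
|p(t)|\le e^{|a|/2}|F(t)|,
\quad\text{hence}\quad
\sup_{I_0}|p|\le e^{|a|/2}\sup_{I_0}|F|.
$$

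Second, the geometric means. Integrating the identity $\log|p| = \log|F| + at$ over $E$ gives
$$
\log G_E(p) = \log G_E(F) + \frac{a}{|E|}\int_E t\,dt.
$$
Since $E\subset I_0$, the average of $t$ over $E$ lies in $[-1/2,1/2]$. Therefore $G_E(p)\ge e^{-|a|/2}G_E(F)$, or equivalently $1\le e^{|a|/2}G_E(p)/G_E(F)$.

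Multiplying these two inequalities gives
$$
\sup_{I_0}|p|\le e^{|a|}\frac{\sup_{I_0}|F|}{G_E(F)}G_E(p),
$$
which is the claim, with the two factors $e^{|a|/2}$ combining into $e^{|\operatorname{Re}\lambda_0|}$. For the special case, take $p = e^{\lambda t}$ and $\lambda_0 = \lambda$. Then $F\equiv1$, so $\sup_{I_0}|F| = G_E(F) = 1$, and $\log|p| = (\operatorname{Re}\lambda)t$ is bounded and hence integrable on $E$.

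There is no genuine obstacle here. The only point to check is that the averaging of $t$ over $E$ stays within $[-1/2,1/2]$, and this is immediate from $E\subset I_0$.
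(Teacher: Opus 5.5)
Your proposal is correct and follows essentially the same route as the paper: both compare $\sup_{I_0}|p|$ with $\sup_{I_0}|F|$ through the factor $e^{\operatorname{Re}\lambda_0 t}$, and use the identity $\log G_E(p)=\log G_E(F)+\frac{\operatorname{Re}\lambda_0}{|E|}\int_E t\,dt$. The only cosmetic difference is that you bound the two contributions by $e^{|\operatorname{Re}\lambda_0|/2}$ separately, while the paper bounds their combined effect by $|\operatorname{Re}\lambda_0|$ in one step.
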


\begin{proof}
Let $\alpha = |E|$. Since both $t\in I_0$ and the average
$\alpha^{-1}\int_Et dt$ lie in $[-1/2,1/2]$,
$$
 \sup_{t\in I_0}\operatorname{Re}\lambda_0 t
 -\frac1\alpha\int_E\operatorname{Re}\lambda_0 t dt
 \le |\operatorname{Re}\lambda_0|.
$$
Also
$$
 G_E(F)
 =
 \exp\left(-\frac1\alpha\int_E\operatorname{Re}\lambda_0 t dt\right)
 G_E(p).
$$
Combining this identity with
$$
 \sup_{I_0}|p|
 \le
 \exp\left(\sup_{t\in I_0}\operatorname{Re}\lambda_0 t\right)
 \sup_{I_0}|F|
$$
gives the first assertion. The second follows by taking $p = e^{\lambda t}$
and $F\equiv1$.
\end{proof}

\begin{corollary}[Bounded-spectrum geometric Remez] \label{cor:bs}
Let $m\ge1$, and let
$$
 p(t) = \sum_{j = 1}^m c_j e^{\lambda_jt}
$$
be a nonzero exponential polynomial of order at most $m$. Put
$$
 \sigma = \max_j |\lambda_j|.
$$
If $E\subset I_0$ is measurable and $|E| = \alpha>0$, then
$$
 \sup_{I_0}|p|
 \le
 e^{C\sigma}
 \left(\frac C\alpha\right)^{m-1}
 G_E(p).
$$
Consequently,
$$
 \sup_{I_0}|p|
 \le
 e^{C\sigma}
 \left(\frac C\alpha\right)^{m-1}
 \esssup_E |p|.
$$
\end{corollary}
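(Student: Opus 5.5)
Extend $p$ to the entire function $f(z) = \sum_j c_j e^{\lambda_j z}$ and apply Theorem~\ref{thm:dg} with $d = m-1$, checking the two disk-growth hypotheses through Lemma~\ref{lem:edg}. After reducing the representation, $f\not\equiv0$ has order $m'\le m$. Its exponents satisfy $\max|\lambda|\le\sigma$, so we may apply Lemma~\ref{lem:edg} with this $\sigma$.

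\emph{Order one.} If $m'=1$, then $p = ce^{\lambda t}$ and the second assertion of Lemma~\ref{lem:exp} gives $\sup_{I_0}|p|\le e^{|\operatorname{Re}\lambda|}G_E(p)\le e^{\sigma}G_E(p)$. This is the claim, since the factor $(C/\alpha)^{m-1}$ is at least $1$ once $C\ge1$ (recall $\alpha\le1$). So we may assume $m'\ge2$ and set $d = m'-1\ge1$.

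\emph{Global growth \eqref{G0}.} Apply Lemma~\ref{lem:edg} with $z_0=0$, $R=4$, $r=1$:
$$
\sup_{\Disk(0,4)}|f|\le e^{8\sigma}(4C)^{m'-1}\sup_{\Disk(0,1)}|f|.
$$
So \eqref{G0} holds with $H_0 = \max\{1,\,8\sigma+(m'-1)\log(4C)\}$.

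\emph{Local growth \eqref{G1}.} For $x\in I_0$ and $0<r\le1$, apply Lemma~\ref{lem:edg} with $z_0=x$, $R=4$:
$$
\sup_{\Disk(x,4)}|f|\le e^{8\sigma}(4C)^{d}\,r^{-d}\sup_{\Disk(x,r)}|f|.
$$
So \eqref{G1} holds with $H_1 = \max\{d,\,8\sigma+d\log(4C)\}$.

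\emph{Conclusion.} Both $H_0$ and $H_1$ are $\le C(\sigma+m')$. Theorem~\ref{thm:dg} then gives
$$
\sup_{I_0}|p|\le e^{C(\sigma+m)}\left(\frac C\alpha\right)^{m'-1}G_E(p).
$$
Write $e^{Cm} = (e^{C})^{m}$ and absorb it into the base. Since $\alpha\le1$, this gives $e^{Cm}(C/\alpha)^{m'-1}\le (C'/\alpha)^{m-1}$, after enlarging $C'$ to cover the extra power $e^{C}$ when passing from $m$ to $m-1$ (note $m\ge m'\ge2$ here). This yields the geometric-mean bound. The $\esssup$ version follows from $G_E(p)\le\esssup_E|p|$.

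The main obstacle is the bookkeeping: the bound for $H_0$ grows linearly in $m$, and one must check that this produces only a factor $e^{Cm}$, which is absorbed into the algebraic base $(C/\alpha)^{m-1}$, rather than a worse dependence.
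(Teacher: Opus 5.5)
Your proposal is correct and follows the paper's proof essentially step for step: reduce to exact order $n$, handle $n=1$ by Lemma~\ref{lem:exp}, verify \eqref{G0} and \eqref{G1} from Lemma~\ref{lem:edg} with outer radius $R=4$, apply Theorem~\ref{thm:dg} with $d=n-1$, and absorb $e^{Cn}$ into $(C/\alpha)^{n-1}$ using $n\ge2$. Your explicit choices of $H_0,H_1$ (the maxima guarantee $H_0\ge1$ and $H_1\ge d$) and the final passage from $n$ to $m$ via $\alpha\le1$ match the paper's bookkeeping.
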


\begin{proof}
Reduce the representation by combining equal exponents and deleting zero
coefficients. Let $n = \ord p\le m$. The spectral radius of the reduced
representation is at most the original $\sigma$. If $n = 1$, the result
follows from Lemma \ref{lem:exp}.

Assume $n\ge2$ and set $d = n-1$. By Lemma \ref{lem:edg},
\eqref{G0} holds for $p$ with
$$
 H_0 = C(\sigma+n),
$$
and \eqref{G1} holds with
$$
 H_1 = C(\sigma+n),
\quad d = n-1.
$$
Indeed, for \eqref{G0} use $R = 4$, $r = 1$, and for \eqref{G1} use the
same outer radius $R = 4$ and arbitrary $0<r\le1$. Increasing $C$ if
necessary, $H_0\ge1$ and $H_1\ge d$.

Theorem \ref{thm:dg} gives, with an absolute constant $C_1$,
$$
 \sup_{I_0}|p|
 \le
 e^{C_1(\sigma+n)}
 \left(\frac {C_1}\alpha\right)^{n-1}
 G_E(p).
$$
Since $n\ge2$,
$$
 e^{C_1n}
 \le
 \left(e^{2C_1}\right)^{n-1}.
$$
Therefore, for a new absolute constant $C_2$,
$$
 e^{C_1n}\left(\frac {C_1}\alpha\right)^{n-1}
 \le
 \left(\frac {C_2}\alpha\right)^{n-1}.
$$
This is the only absorption of an order-dependent exponential factor in this
corollary. Also $n\le m$ and $0<\alpha\le1$. Hence, after changing the
absolute constant once more,
$$
 \sup_{I_0}|p|
 \le
 e^{C\sigma}
 \left(\frac C\alpha\right)^{m-1}
 G_E(p).
$$
The measurable form follows from $G_E(p)\le\esssup_E|p|$.
\end{proof}

%%%%%%%%%%%%%%%%%%%%%%%%%%%%%%%%%%%%%%%%%%%
\section{Large-diameter pruning}

The large-diameter step rests on a weak logarithmic-derivative estimate. Its
proof is placed in Appendix \ref{app:wld}. The only real-variable input is the
weak $(1,1)$ inequality for the Hilbert transform, applied to the Cauchy sum
over the zeros; the remaining ingredients are disk zero counting and the
zero-free quotient estimate from Appendix \ref{app:bc}.

\begin{lemma}[Weak logarithmic derivative] \label{lem:wld}
Let
$$
 q(t) = \sum_{k = 1}^m a_k e^{\mu_k t}
$$
be a nonzero exponential polynomial of order at most $m$, and put
$$
 \sigma = \max_k |\mu_k|.
$$
Then, for every $u>0$,
$$
 \left|
 \left\{t\in I_0:
 q(t)\ne0,
 |q'(t)/q(t)|>u
 \right\}
 \right|
 \le
 \frac{C(m+\sigma)}{u}.
$$
\end{lemma}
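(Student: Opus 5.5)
We will factor $q$ on a complex neighbourhood of $I_0$ into a Blaschke-type polynomial part and a zero-free part, and bound the two resulting pieces of $q'/q$ separately. Fix the disk $\Disk(0,2)$ and let $\mathcal Z = \{z_1,\ldots,z_N\}$ be the zeros of $q$ in $\Disk(0,2)$, counted with multiplicity. We write
$$
 q(z) = P(z)\,g(z),\quad P(z) = \prod_{j=1}^N (z-z_j),
$$
with $g$ holomorphic and zero-free in $\Disk(0,2)$. Then, for $t\in I_0$ with $q(t)\ne0$,
$$
 \frac{q'(t)}{q(t)} = \sum_{j=1}^N \frac{1}{t-z_j} + \frac{g'(t)}{g(t)}.
$$
The plan has three steps: count the zeros, bound the zero-free part pointwise, and bound the Cauchy sum in weak $L^1$.

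\textbf{Step 1 (zero counting).} Lemma \ref{lem:edg} with $R=4$, $r$ fixed (say $r=1/2$) and $z_0=0$ gives
$$
 \sup_{\Disk(0,4)}|q|\le e^{C(\sigma+m)}\sup_{\Disk(0,1/2)}|q|.
$$
Jensen's formula, in the disk zero-counting form of Appendix \ref{app:bc} or as in Proposition \ref{prop:zp}, then yields $N\le C(m+\sigma)$.

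\textbf{Step 2 (zero-free part).} The function $\log|g|$ is harmonic in a slightly smaller disk, say $\Disk(0,3/2)$. We also want an upper bound for $\log|g|$ there, relative to its value at some reference point, of size $C(m+\sigma)$. This follows from the same growth bound together with the standard control of $|P|$ from below on a circle avoiding the zeros, e.g.\ via a Cartan-type radius choice. This is exactly the zero-free quotient estimate of Appendix \ref{app:bc}. Harnack's inequality, or the Borel--Carathéodory bound applied to $\log g$, then gives
$$
 \sup_{[-1/2,1/2]}|g'/g|\le C(m+\sigma).
$$
Hence the set where $|g'/g|>u/2$ is empty unless $u\le C(m+\sigma)$. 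In that case the claimed bound exceeds $1=|I_0|$ after enlarging $C$, so it holds trivially.

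\textbf{Step 3 (Cauchy sum).} For each $j$, write
$$
 \frac1{t-z_j}=\frac{t-\bar z_j}{|t-z_j|^2}.
$$
The real part is the Poisson-conjugate kernel and the imaginary part is the Poisson kernel, both evaluated at the point $z_j$. Consequently, for $t\in\R$,
$$
 \sum_j\frac{1}{t-z_j}=\pi\bigl(H\mu+i\,P\mu\bigr)(t)
$$
up to sign conventions. Here $\mu$ is the measure consisting of unit masses at the $z_j$ pushed to the real line by Poisson/conjugate-Poisson smoothing. The Poisson part is the Poisson extension of $\mu$ and is dominated by the Hardy--Littlewood maximal function. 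Alternatively, and more concretely, we use the weak type of the maximal Hilbert transform for measures: for $\nu=\sum_j\delta_{\operatorname{Re} z_j}$ and $y_j=|\operatorname{Im} z_j|$, the function $\sum_j (t-z_j)^{-1}$ is a sum of kernels bounded by truncated Hilbert and maximal kernels. This gives
$$
 \Bigl|\Bigl\{t\in\R:\Bigl|\sum_j (t-z_j)^{-1}\Bigr|>u/2\Bigr\}\Bigr|\le \frac{CN}{u}.
$$
Combining this with Steps 1 and 2 and the union bound gives the lemma.

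The main obstacle is Step 3: the weak-type bound for the Cauchy transform of a finite set of complex points, uniformly in their imaginary parts. I would handle it by reducing the Poisson part to the maximal function of $\nu$ and the conjugate part to the Hilbert transform of $\nu$ convolved with Poisson kernels, using weak $(1,1)$ for both with total mass $N$. Real zeros ($y_j=0$) are included directly as the Hilbert transform of point masses. The weak $(1,1)$ inequality for measures can be obtained by approximating the point masses by normalized indicators of small intervals and passing to the limit; this is the step I expect to need the most care.
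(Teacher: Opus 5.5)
Your architecture is the paper's. You factor out the zeros of $q$ in $\Disk(0,2)$, count them by $C(m+\sigma)$ via Lemma~\ref{lem:edg} and a Blaschke/Jensen argument, and bound the logarithmic derivative of the zero-free factor by $C(m+\sigma)$ on $I_0$ via Lemma~\ref{lem:zf}, which disposes of the range $u\le C(m+\sigma)$. You then treat the Cauchy sum by the weak $(1,1)$ inequality for the Hilbert transform. Dividing by the polynomial $P$ instead of the Blaschke product $B_Z^{0,4}$ is a harmless variant. It removes the bounded terms $\overline\zeta/(16-\overline\zeta z)$. The hypothesis of Lemma~\ref{lem:zf} for $g=q/P$ follows directly from $|P|\ge 2^N$ on $|z|=4$ and $|P|\le 3^N$ on $\Disk(0,1)$, so no Cartan-type radius choice is needed.

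What does not work as written is the justification in Step 3. Write $z_j=a_j+iy_j$ and $\nu=\sum_j\delta_{a_j}$.
\begin{itemize}
\item The Poisson part $\sum_j|y_j|/((t-a_j)^2+y_j^2)$ is not pointwise dominated by $CM\nu(t)$ when the heights $y_j$ vary. For $z_j=(1+i)j/N$ it equals $\frac N2\sum_{j\le N}\frac1j$ at $t=0$, while $M\nu(0)\le N/2$.
\item Your ``more concrete'' alternative also fails. The kernels $(t-a_j)/((t-a_j)^2+y_j^2)$ carry a different truncation scale for each $j$, so they cannot be compared with the maximal truncated Hilbert transform $H^*\nu$, which uses one truncation per point. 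Take the points $-j/N$ real and the points $j/N+ij/N$, $1\le j\le N$. The real part of the Cauchy sum at $t=0$ is then $\frac N2\sum_{j\le N}\frac1j$, while $H^*\nu(0)=0$ by symmetry and $M\nu(0)\le N$.
\end{itemize}

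The fix is the bookkeeping of Lemma~\ref{lem:cauchy}.
\begin{itemize}
\item The imaginary part is an $L^1(\R)$ function of norm at most $\pi N$, so Chebyshev's inequality suffices.
\item The real part is, up to normalization, $H\mu$ with $\mu=\sum_{y_j\ne0}\frac{|y_j|\,dt}{\pi((t-a_j)^2+y_j^2)}+\sum_{y_j=0}\delta_{a_j}$. This is a positive measure of mass $N$, to which the weak $(1,1)$ bound for finite measures applies.
\end{itemize}
If the $\mu$ in your display was already this Poisson-smoothed measure, the formula is correct and the maximal function is an unnecessary detour. With this repair your proof is complete.
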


\begin{lemma}[Large-diameter pruning] \label{lem:prune}
Let
$$
 q(t) = \sum_{\lambda\in\Gamma}b_\lambda e^{\lambda t}
$$
be reduced and of exact order $n\ge2$. Put
$$
 \rho = \diam\Gamma.
$$
Assume
$$
 \rho>A_0n
$$
for a fixed absolute constant $A_0\ge1$. Let $a\in\Gamma$, and define
$$
 Q_a = \rho^{-1}(D-a)q.
$$
Then, for every $u>0$,
$$
 \left|
 \left\{t\in I_0:
 q(t)\ne0,
 |Q_a(t)|>u|q(t)|
 \right\}
 \right|
 \le
 \frac Cu,
$$
where $C$ is absolute once $A_0$ has been fixed.
\end{lemma}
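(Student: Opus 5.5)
Write $Q_a/q = \rho^{-1}(q'/q - a)$. The natural plan is to remove the factor $e^{at}$ and apply Lemma \ref{lem:wld} to a polynomial whose spectrum has radius at most $\rho$. Put
$$
\tilde q(t) = e^{-at}q(t) = \sum_{\lambda\in\Gamma} b_\lambda e^{(\lambda-a)t}.
$$
This is a nonzero exponential polynomial of order $n$. Its exponents $\lambda-a$ all satisfy $|\lambda-a|\le\rho$, because $a\in\Gamma$ and $\rho=\diam\Gamma$; hence the spectral radius of $\tilde q$ is at most $\rho$. The zero sets of $q$ and $\tilde q$ agree, and
$$
\frac{\tilde q'}{\tilde q} = \frac{q'}{q}-a = \frac{(D-a)q}{q} = \rho\,\frac{Q_a}{q}.
$$
So the set in question is
$$
\left\{t\in I_0:\ \tilde q(t)\ne0,\ |\tilde q'(t)/\tilde q(t)|>\rho u\right\}.
$$

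Applying Lemma \ref{lem:wld} to $\tilde q$ with $m=n$, $\sigma\le\rho$ and threshold $\rho u$ bounds the measure of this set by
$$
\frac{C(n+\rho)}{\rho u}.
$$
Now use the hypothesis $\rho>A_0 n$ with $A_0\ge1$. It gives $n<\rho/A_0\le\rho$, so $n+\rho\le 2\rho$. The bound is therefore at most $2C/u$, which is the claim with a new absolute constant.

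The argument only needs $\rho\gtrsim n$; the size of $A_0$ plays no role beyond $A_0\ge1$, and the constant depends on $A_0$ at most through $1+1/A_0\le2$. The only real content is Lemma \ref{lem:wld}, which is taken as given. The one point to check is that Lemma \ref{lem:wld} allows arbitrary complex exponents with $\sigma=\max|\mu_k|$. Its statement does: $q(t) = \sum_k a_k e^{\mu_k t}$ with $\sigma=\max_k|\mu_k|$ and no restriction on the $\mu_k$. The recentred exponents $\lambda-a$ (one of which is $0$) therefore qualify.
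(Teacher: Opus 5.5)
Your proof is correct and is essentially the paper's argument: the paper also sets $g(t)=e^{-at}q(t)$, notes that its exponents $\lambda-a$ have modulus at most $\rho$ and that $g'/g=(D-a)q/q$, and applies Lemma \ref{lem:wld} at threshold $u\rho$, using $\rho>A_0n$ to bound $C(n+\rho)/(u\rho)$ by $C/u$. Your added observation that the resulting constant is in fact uniform over all $A_0\ge1$ (since $n+\rho\le2\rho$) is also correct.
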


\begin{proof}
Put $g(t) = e^{-at}q(t)$. Then $g$ has order $n$ and exponents
$\lambda-a$, $\lambda\in\Gamma$, whose moduli are at most $\rho$. Away
from the zero set of $q$,
$$
 \frac{g'(t)}{g(t)} = \frac{(D-a)q(t)}{q(t)}.
$$
Therefore
$$
 |Q_a(t)|>u|q(t)|
\quad\Longleftrightarrow\quad
 \left|\frac{g'(t)}{g(t)}\right|>u\rho.
$$
Lemma \ref{lem:wld}, applied to $g$ with threshold
$u\rho$, gives
$$
 \left|
 \left\{t\in I_0:
 q(t)\ne0,
 |Q_a(t)|>u|q(t)|
 \right\}
 \right|
 \le
 \frac{C(n+\rho)}{u\rho}
 \le \frac Cu,
$$
because $\rho>A_0n$.
\end{proof}

\begin{lemma}[Geometric quotient control] \label{lem:gq}
Under the hypotheses of Lemma \ref{lem:prune}, let
$E\subset I_0$ be measurable with $|E| = \alpha>0$. Then
$$
 G_E(Q_a)
 \le
 \frac C\alpha G_E(q),
$$
where $C$ is absolute once $A_0$ has been fixed.
\end{lemma}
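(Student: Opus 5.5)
We will prove $G_E(Q_a)\le (C/\alpha)G_E(q)$ by writing $\log|Q_a| = \log|q| + \log|Q_a/q|$ on $E$ minus the finite zero set of $q$, and bounding the average of $\log^+|Q_a/q|$ over $E$ with the weak estimate of Lemma \ref{lem:prune}. First, note that $Q_a$ is a nonzero exponential polynomial: applying $D-a$ removes the term $e^{at}$ and multiplies the others by $(\lambda-a)\neq0$. Since $n\ge2$, $Q_a$ has exact order $n-1\ge1$, so $G_E(Q_a)$ is well defined. Both $q$ and $Q_a$ vanish only on finite subsets of $I_0$, so off a null set of $E$ we have the pointwise identity
$$
 \log|Q_a(t)| = \log|q(t)| + \log\frac{|Q_a(t)|}{|q(t)|}.
$$
The quotient term lies in $L^1(E)$ as a difference of integrable functions. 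Averaging over $E$ gives
$$
 \log G_E(Q_a) = \log G_E(q) + \frac1\alpha\int_E \log\frac{|Q_a|}{|q|}\,dt
 \le \log G_E(q) + \frac1\alpha\int_E \log^+\frac{|Q_a|}{|q|}\,dt.
$$

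It remains to bound the last average by $\log(C/\alpha)$. By the layer-cake formula and Lemma \ref{lem:prune} with $u = e^s$,
$$
 \frac1\alpha\int_E \log^+\frac{|Q_a|}{|q|}
 = \int_0^\infty \frac{|\{t\in E: q(t)\neq0,\ |Q_a(t)|>e^s|q(t)|\}|}{\alpha}\,ds
 \le \int_0^\infty \min\Bigl\{1,\frac{C}{\alpha}e^{-s}\Bigr\}\,ds.
$$
Here we bound the measure inside $E$ both by $\alpha$ and by the $I_0$-measure from Lemma \ref{lem:prune}. Splitting the integral at $s_0=\log(C/\alpha)$, which is $\ge0$ after enlarging $C\ge1$ since $\alpha\le1$, gives the bound $s_0+1=\log(eC/\alpha)$. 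Exponentiating yields $G_E(Q_a)\le (eC/\alpha)G_E(q)$, which is the claim with a new absolute constant, depending only on $A_0$ through Lemma \ref{lem:prune}.

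The only delicate point is justifying the splitting of $\log|Q_a|$. This requires that both logarithms be integrable on $E$, with the finite-set convention, and that the weak estimate be applied only where $q\ne0$. Both follow from the finiteness of the zero sets of nonzero exponential polynomials on $I_0$, as noted in the introduction. No lower bound on $\log|Q_a/q|$ is needed, because we only need an upper bound on $G_E(Q_a)$. That is exactly why the one-sided weak estimate suffices.
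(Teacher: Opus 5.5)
Your proposal is correct and follows essentially the same route as the paper: bound $\log|Q_a|\le\log|q|+\log_+(|Q_a|/|q|)$ off the finite zero set, then control the $E$-average of the $\log_+$ term by the layer-cake formula and the weak estimate of Lemma \ref{lem:prune}, giving $\int_0^\infty\min\{1,C/(\alpha e^s)\}\,ds\le\log(C/\alpha)$ after adjusting $C$. The only cosmetic difference is that you first write the exact identity with $\log(|Q_a|/|q|)$ and then drop to $\log^+$, whereas the paper uses the pointwise inequality directly.
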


\begin{proof}
Under the hypotheses of Lemma \ref{lem:prune}, $Q_a\not\equiv0$, since
$D-a$ kills exactly the $a$-term and leaves all other spectral terms with
nonzero coefficients. Thus $q$ and $Q_a$ are nonzero exponential polynomials,
and their zeros on $I_0$ are finite. Hence $\log|q|$ and $\log|Q_a|$ are
locally integrable on $I_0$. Define
$$
 R_a(t) = \frac{|Q_a(t)|}{|q(t)|}
$$
where $q(t)\ne0$, and define it arbitrarily on the finite zero set of
$q$. Changing $R_a$ on this finite set does not affect distribution functions
or integrals. By Lemma \ref{lem:prune},
$$
 |\{t\in I_0:R_a(t)>u\}|\le \frac Cu,
\quad u>0.
$$
Using the distribution formula for $\log_+R_a$,
$$
\begin{aligned}
\frac1\alpha\int_E\log_+R_a(t) dt \le \int_0^\infty
\min\left\{1,\frac{C}{\alpha e^s}\right\} ds \le \log\frac C\alpha.
\end{aligned}
$$
Since
$$
\log |Q_a(t)| \le \log |q(t)|+\log_+R_a(t)
$$
outside a finite set, integration over $E$ gives
$$
\frac1\alpha\int_E\log |Q_a(t)| dt \le \frac1\alpha\int_E\log |q(t)| dt +\log\frac C\alpha.
$$
Exponentiating proves the claim.
\end{proof}

%%%%%%%%%%%%%%%%%%%%%%%%%%%%%%%%%%%%%%%%%%%
\section{Proof of the Tur\'an-Nazarov theorem}

We first prove the normalized geometric form on $I_0$. The general interval
case follows by scaling.

\begin{theorem}[Normalized geometric Tur\'an-Nazarov estimate] \label{thm:ntn}
Let $m\ge1$, and let
$$
 p(t) = \sum_{j = 1}^m c_j e^{\lambda_jt}
$$
be a nonzero exponential polynomial of order at most $m$. After reducing the
representation, put
$$
 M_{\Re} = \max_{\lambda\in\spec p} |\operatorname{Re}\lambda|.
$$
Let $E\subset I_0$ be measurable with $|E| = \alpha>0$. Then
$$
 \sup_{I_0}|p|
 \le
 \exp(M_{\Re})
 \left(\frac C\alpha\right)^{m-1}
 G_E(p).
$$
The constant $C$ is absolute.
\end{theorem}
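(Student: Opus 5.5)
We prove, by induction on the exact order $n = \ord p\le m$ of the reduced representation, the sharper statement
$$
 \sup_{I_0}|p|\le \exp(M_{\Re})\left(\frac{C_*}{\alpha}\right)^{n-1}G_E(p)
$$
for a single absolute constant $C_*\ge1$, to be fixed below. Since $\alpha\le1$ and $C_*\ge1$, we have $(C_*/\alpha)^{n-1}\le(C_*/\alpha)^{m-1}$, so this implies Theorem \ref{thm:ntn}. The base case $n=1$ is exactly the second assertion of Lemma \ref{lem:exp}, since then $p=ce^{\lambda t}$ with $|\operatorname{Re}\lambda| = M_{\Re}$. For $n\ge2$ put $\rho=\diam(\spec p)$ and split according to whether $\rho\le A_0 n$ or $\rho>A_0n$. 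Here $A_0$ is the fixed constant of Lemma \ref{lem:prune}.

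\emph{Bounded diameter, $\rho\le A_0n$.} Pick any $a\in\spec p$ and set $F=e^{-at}p$. Then $F$ has order $n$ and spectral radius at most $\rho\le A_0 n$. Corollary \ref{cor:bs}, applied with $m=n$, gives
$$
 \sup_{I_0}|F|\le e^{CA_0n}\left(\frac C\alpha\right)^{n-1}G_E(F).
$$
Since $n\ge2$, we can absorb $e^{CA_0n}\le (e^{2CA_0})^{n-1}$ into the base, which yields $\sup|F|/G_E(F)\le (C_1/\alpha)^{n-1}$ with $C_1$ absolute. This is the same absorption already performed in the proof of Corollary \ref{cor:bs}. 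The first assertion of Lemma \ref{lem:exp}, with $\lambda_0=a$, then gives
$$
 \sup_{I_0}|p|\le e^{|\operatorname{Re}a|}\left(\frac{C_1}\alpha\right)^{n-1}G_E(p).
$$
Finally $|\operatorname{Re}a|\le M_{\Re}$. So the claim holds in this case as soon as $C_*\ge C_1$.

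\emph{Large diameter, $\rho>A_0n$.} Choose $a,b\in\spec p$ with $|b-a|=\rho$, and form $Q_a=\rho^{-1}(D-a)p$ and $Q_b=\rho^{-1}(D-b)p$. Each is nonzero, of exact order $n-1$, and has spectrum contained in $\spec p$. Hence its $M_{\Re}$ is at most that of $p$, and $\exp(M_{\Re})$ is preserved under the induction.

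From $Q_a-Q_b=\frac{b-a}{\rho}p$ and $|b-a|=\rho$ we get $|p|\le|Q_a|+|Q_b|$ pointwise, so $\sup|p|\le\sup|Q_a|+\sup|Q_b|$. The induction hypothesis applied to $Q_a$ on the same set $E$ gives
$$
 \sup_{I_0}|Q_a|\le \exp(M_{\Re})\left(\frac{C_*}\alpha\right)^{n-2}G_E(Q_a).
$$
Lemma \ref{lem:gq} gives $G_E(Q_a)\le (C_2/\alpha)G_E(p)$, and the same two bounds hold for $Q_b$. Adding them,
$$
 \sup_{I_0}|p|\le \exp(M_{\Re})\left(\frac{C_*}\alpha\right)^{n-2}\frac{2C_2}{\alpha}G_E(p),
$$
which closes the induction provided $C_*\ge 2C_2$.

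The main point to check is that the constants close. The induction constant must be fixed once, as $C_*=\max\{1,C_1,2C_2\}$, and each order reduction may cost only a single factor $C_*/\alpha$. This works because both $C_1$ and $C_2$ are absolute once $A_0$ is fixed: the bounded-case loss $e^{Cn}$ is absorbed into $(C_1/\alpha)^{n-1}$ rather than compounded across steps. It also works because the induction is applied to the same set $E$, through $G_E$, so no proportion of $E$ is discarded at any step.
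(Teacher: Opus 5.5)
Your proposal is correct and follows the paper's own proof essentially step for step. It reduces to exact order using $\alpha\le1$ and inducts on the order with the base case from Lemma \ref{lem:exp}. It handles the bounded-diameter case via Corollary \ref{cor:bs} and Lemma \ref{lem:exp}, with the absorption $e^{CA_0n}\le(e^{2CA_0})^{n-1}$. It handles the large-diameter case via $|p|\le|Q_a|+|Q_b|$, the induction hypothesis on the same set $E$, and Lemma \ref{lem:gq}. Your constant $C_*=\max\{1,C_1,2C_2\}$ matches the paper's conditions $C_{\rm TN}\ge C_{\rm bs}e^{2C_{\rm bs}A_0}$ and $C_{\rm TN}\ge 2C_{\rm gq}$.
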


\begin{proof}
Delete zero coefficients and combine equal exponents. It is enough to prove
the estimate with $m = \ord p$, because $0<\alpha\le1$. Thus assume that
$$
 p(t) = \sum_{\lambda\in\Lambda}c_\lambda e^{\lambda t},
\quad |\Lambda| = m,
$$
is reduced and of exact order $m$, that is, it has exactly $m$ distinct
exponential terms. Then
$$
 M_{\Re} = \max_{\lambda\in\Lambda}|\operatorname{Re}\lambda|.
$$

Fix an absolute constant $A_0\ge1$. Let $C_{\rm bs}$ be chosen large
enough to serve as the constant in both places in Corollary \ref{cor:bs},
and let $C_{\rm gq}$ be a constant for which Lemma
\ref{lem:gq} holds with this fixed $A_0$. Choose
$C_{\rm TN}\ge1$ so large that
$$
 C_{\rm TN}\ge C_{\rm bs}e^{2C_{\rm bs}A_0},
\quad
 C_{\rm TN}\ge2C_{\rm gq}.
$$
We prove the theorem by induction on $m$, with $C = C_{\rm TN}$.

If $m = 1$, the result follows from Lemma
\ref{lem:exp}. Assume now that $m\ge2$ and that the
claim has been proved for all exact orders smaller than $m$.

First suppose that
$$
 \diam\Lambda\le A_0m.
$$
Choose $\lambda_0\in\Lambda$, and put
$$
 F(t) = e^{-\lambda_0t}p(t).
$$
The spectrum of $F$ is contained in $\Disk(0,A_0m)$. Corollary
\ref{cor:bs} gives
$$
 \sup_{I_0}|F|
 \le
 e^{C_{\rm bs}A_0m}
 \left(\frac{C_{\rm bs}}\alpha\right)^{m-1}
 G_E(F).
$$
By Lemma \ref{lem:exp},
$$
 \sup_{I_0}|p|
 \le
 e^{|\operatorname{Re}\lambda_0|}
 e^{C_{\rm bs}A_0m}
 \left(\frac{C_{\rm bs}}\alpha\right)^{m-1}
 G_E(p).
$$
Since $m\ge2$,
$$
 e^{C_{\rm bs}A_0m}
 \le
 e^{2C_{\rm bs}A_0(m-1)}.
$$
Therefore
$$
 e^{C_{\rm bs}A_0m}
 \left(\frac{C_{\rm bs}}\alpha\right)^{m-1}
 \le
 \left(\frac{C_{\rm bs}e^{2C_{\rm bs}A_0}}\alpha\right)^{m-1}.
$$
Since $|\operatorname{Re}\lambda_0|\le M_{\Re}$ and
$C_{\rm TN}\ge C_{\rm bs}e^{2C_{\rm bs}A_0}$, we get
$$
 \sup_{I_0}|p|
 \le
 e^{M_{\Re}}
 \left(\frac{C_{\rm TN}}\alpha\right)^{m-1}
 G_E(p).
$$
This proves the induction step in the moderate-diameter case.

It remains to consider the case
$$
 \rho := \diam\Lambda>A_0m.
$$
Choose $a,b\in\Lambda$ such that $|a-b| = \rho$, and define
$$
 Q_a = \rho^{-1}(D-a)p,
\quad
 Q_b = \rho^{-1}(D-b)p.
$$
Then
$$
 Q_a-Q_b = \rho^{-1}\bigl((D-a)-(D-b)\bigr)p
 = \frac{b-a}{\rho}p.
$$
Since $|b-a| = \rho$,
$$
 |p(t)|\le |Q_a(t)|+|Q_b(t)|,
\quad t\in\R.
$$
The functions $Q_a$ and $Q_b$ are nonzero exponential polynomials of exact
order $m-1$. Indeed, $D-a$ kills exactly the term with exponent $a$ and
multiplies every other term by $\lambda-a\ne0$, and no exponents merge.
Their real-part parameters are at most $M_{\Re}$.

By Lemma \ref{lem:gq}, applied on the original set $E$,
$$
G_E(Q_a)\le\frac{C_{\rm gq}}\alpha G_E(p),
\quad
G_E(Q_b)\le\frac{C_{\rm gq}}\alpha G_E(p).
$$
Applying the induction hypothesis to $Q_a$ and then using the last estimate
gives
$$
\begin{aligned}
\sup_{I_0}|Q_a|
&\le
e^{M_{\Re}}
\left(\frac{C_{\rm TN}}\alpha\right)^{m-2}
G_E(Q_a) \le
e^{M_{\Re}}
\left(\frac{C_{\rm TN}}\alpha\right)^{m-2}
\frac{C_{\rm gq}}\alpha
G_E(p) \\
&=
e^{M_{\Re}}
C_{\rm gq}
\frac{C_{\rm TN}^{m-2}}{\alpha^{m-1}}
G_E(p).
\end{aligned}
$$
The same bound holds for $Q_b$. Thus the lower-order induction contributes
$\alpha^{-(m-2)}$, and the geometric quotient control contributes exactly one
additional factor $\alpha^{-1}$; no subset of $E$ has been discarded. Hence
$$
\sup_{I_0}|p| \le
e^{M_{\Re}}
2C_{\rm gq}
\frac{C_{\rm TN}^{m-2}}{\alpha^{m-1}}
G_E(p).
$$
Since $C_{\rm TN}\ge2C_{\rm gq}$, this is
$$
\sup_{I_0}|p| \le
e^{M_{\Re}}
\left(\frac{C_{\rm TN}}\alpha\right)^{m-1}
G_E(p).
$$
The induction is complete.
\end{proof}

\begin{proof}[Proof of Theorem \ref{thm:gtn}]
Let $I\subset\R$ be a bounded interval. Since $|E|>0$, the length
$L = |I|$ is positive. Since $p$ is continuous, the supremum over $I$ equals
the supremum over its closure. Thus we may use the center of the closed
interval with the same endpoints. Let $t_0$ be this center, write
$$
t = t_0+Ls,
\quad s\in I_0,
$$
and set
$$
P(s) = p(t_0+Ls).
$$
Then the reduced spectrum of $P$ is $L\spec p$, counted after the harmless
change of coefficients. Hence the transformed real-part parameter is
$LM_{\Re}$. The transformed set is
$$
E_I = \{s\in I_0:t_0+Ls\in E\},
\quad |E_I| = \frac{|E|}{L},
$$
and $G_{E_I}(P) = G_E(p)$. Applying Theorem
\ref{thm:ntn} to $P$ gives
$$
\sup_I|p| \le
\exp(|I|M_{\Re})
\left(\frac{C|I|}{|E|}\right)^{m-1}
G_E(p).
$$
\end{proof}

\begin{proof}[Proof of Corollary \ref{cor:tn}]
The case $p\equiv0$ is trivial. Otherwise Theorem
\ref{thm:gtn} applies, and
$$
G_E(p)\le\esssup_E |p|.
$$
\end{proof}

\begin{remark}[Why the geometric-mean induction closes]
A direct $L^\infty$ induction based on removing an exceptional subset of
$E$ loses a fixed fraction of $E$ at every order-reduction step. Such a
loss is then raised to the lower-order Remez exponent and produces an unwanted
exponential factor in the order. The geometric quotient control in Lemma
\ref{lem:gq} keeps the original set $E$ and
contributes exactly one additional factor $\alpha^{-1}$, which is precisely
the factor needed when passing from order $m-1$ to order $m$.
\end{remark}

\begin{remark}[Sharpness of the final exponent] \label{rem:sharp}
The exponent $m-1$ in Corollary \ref{cor:tn} is sharp in the usual
sense, already on intervals. Indeed,
$$
p_\delta(t) = \left(\frac{e^{\delta t}-1}{\delta}\right)^{m-1}
$$
is an exponential polynomial of order at most $m$, and
$$
p_\delta\to t^{m-1}
$$
uniformly on compact intervals as $\delta\downarrow0$. Taking
$$
I = [-1,1],
\quad
E = [-\eps,\eps],
$$
and then letting $\delta\downarrow0$, any estimate with algebraic exponent
$\beta<m-1$ would imply, up to harmless absolute constants,
$$
1\le C\eps^{-\beta}\eps^{m-1}
$$
for arbitrarily small $\eps>0$, which is impossible.
\end{remark}

\begin{remark}[Higher-dimensional directions]
Multivariable Tur\'an-Nazarov inequalities on convex bodies are known in
various forms; see, for instance, \cite{BrudnyiYomdin,FontesMerz}. One can
also obtain convex-body estimates by slicing and applying the one-dimensional
inequality on lines. A different question is whether the disk-growth/Cartan
principle and the quotient-pruning mechanism admit genuinely multidimensional
analogues not based on slicing. We leave this question for future work.
\end{remark}

%%%%%%%%%%%%%%%%%%%%%%%%%%%%%%%%%%%%%%%%%%%
\appendix

%%%%%%%%%%%%%%%%%%%%%%%%%%%%%%%%%%%%%%%%%%%
\section{Blaschke, Harnack, and Cartan details} \label{app:bc}

This appendix contains the elementary analytic details used in Section
\ref{sec:dg}. As in the introduction, zeros are counted with multiplicity, and
boundary issues are handled by the standard limiting convention: prove the
assertion with radii slightly smaller or larger than the boundary radius and
then let the auxiliary radius tend to the desired value.

\subsection{Partial Blaschke products}

For $a\in\C$, $R>0$, and $\zeta\in\Disk(a,R)$, define
$$
 b_{\zeta}^{a,R}(z)
 =
 R\frac{z-\zeta}{R^2-\overline{\zeta-a}(z-a)}.
$$
This is the pullback of the standard unit-disk Blaschke factor under
$z\mapsto (z-a)/R$. In particular,
$$
 |b_{\zeta}^{a,R}(z)| = 1
\quad\text{for } |z-a| = R,
\quad
 |b_{\zeta}^{a,R}(z)|\le1
\quad\text{for } z\in\Disk(a,R).
$$
For a finite multiset $Z\subset\Disk(a,R)$, write
$$
 B_Z^{a,R}(z) = \prod_{\zeta\in Z}b_{\zeta}^{a,R}(z),
$$
with the empty product equal to $1$.

\begin{lemma}[Partial Blaschke products] \label{lem:pb}
Let $F$ be holomorphic near $\overline{\Disk(a,R)}$. Let $Z$ be a finite
multiset of zeros of $F$ in $\Disk(a,R)$, with multiplicities not exceeding
their multiplicities as zeros of $F$. Then
$F/B_Z^{a,R}$ extends holomorphically to $\Disk(a,R)$. Moreover, for every
$0<s<R$,
$$
 \sup_{\Disk(a,s)}|F|
 \le
 \left(\sup_{\Disk(a,s)}|B_Z^{a,R}|\right)
 \sup_{\Disk(a,R)}|F|.
$$
\end{lemma}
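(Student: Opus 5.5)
The plan is to prove the two assertions in turn: first holomorphic divisibility, then the sup bound via the maximum principle.

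\emph{Holomorphic extension.} Each factor $b_\zeta^{a,R}$ with $\zeta\in\Disk(a,R)$ has exactly one zero, a simple zero at $\zeta$. Its denominator $R^2-\overline{\zeta-a}(z-a)$ does not vanish for $|z-a|\le R$, because there $|\overline{\zeta-a}(z-a)|\le |\zeta-a|R<R^2$. Hence $B_Z^{a,R}$ is holomorphic near $\overline{\Disk(a,R)}$. Its zeros in that closed disk are exactly the points of $Z$, with the multiplicities of the multiset. Since these multiplicities do not exceed those of $F$, the quotient $F/B_Z^{a,R}$ has only removable singularities in $\Disk(a,R)$. Thus it extends holomorphically to $\Disk(a,R)$, and in fact to a neighbourhood of the closed disk, since $F$ is holomorphic there and $B_Z^{a,R}$ has no zeros on or near the boundary circle.

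\emph{Boundary modulus and the maximum principle.} Put $g=F/B_Z^{a,R}$. On the circle $|z-a|=R$ we have $|B_Z^{a,R}|=1$, because each factor has modulus one there. This is the pullback of the standard fact $|z-w|=|1-\bar w z|$ for $|z|=1$; I will verify it directly by writing $z-a=Re^{i\theta}$ and factoring $e^{i\theta}$ out of the denominator. Therefore $|g|=|F|$ on the boundary circle. Since $g$ is holomorphic near the closed disk, the maximum principle gives
$$
\sup_{\Disk(a,R)}|g|\le\max_{|z-a|=R}|F|\le\sup_{\Disk(a,R)}|F|,
$$
where the last inequality uses continuity of $F$ up to the boundary.

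\emph{Conclusion.} For $z\in\Disk(a,s)$ we write $|F(z)|=|B_Z^{a,R}(z)|\,|g(z)|$. Bounding each factor by its supremum gives
$$
\sup_{\Disk(a,s)}|F|\le\Bigl(\sup_{\Disk(a,s)}|B_Z^{a,R}|\Bigr)\sup_{\Disk(a,R)}|F|.
$$
There is no real obstacle here. The only points needing care are that the denominators do not vanish on the closed disk and that the boundary modulus identity holds; both are the routine computations above.
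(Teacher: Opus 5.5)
Your proof is correct and follows the same route as the paper's: removable singularities of $F/B_Z^{a,R}$, unimodularity of $B_Z^{a,R}$ on $\partial\Disk(a,R)$, the maximum principle, and then multiplication by $|B_Z^{a,R}|$ on $\Disk(a,s)$. You also spell out two points the paper leaves implicit: the denominators do not vanish on the closed disk, and $\max_{|z-a|=R}|F|\le\sup_{\Disk(a,R)}|F|$ holds by continuity.
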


\begin{proof}
The quotient has removable singularities at the points of $Z$. On
$\partial\Disk(a,R)$, the Blaschke product has modulus one. Hence the
maximum principle gives
$$
 \sup_{\Disk(a,R)}\left|\frac{F}{B_Z^{a,R}}\right|
 \le
 \sup_{\Disk(a,R)}|F|.
$$
Multiplying by $B_Z^{a,R}$ on $\Disk(a,s)$ gives the claim.
\end{proof}

\begin{lemma}[One Blaschke factor on a smaller disk] \label{lem:bf}
Let $0<s<R$, let $\zeta\in\Disk(a,R)$, and suppose
$|\zeta-a|\le\rho<R$. Then, for every $z\in\Disk(a,s)$,
$$
 \left|R\frac{z-\zeta}{R^2-\overline{\zeta-a}(z-a)}\right|
 \le
 \frac{R(s+\rho)}{R^2-s\rho}.
$$
\end{lemma}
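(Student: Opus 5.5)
We need to bound $|b(z)|$ where $b(z)=R(z-\zeta)/(R^2-\overline{\zeta-a}(z-a))$, for $|z-a|<s$ and $|\zeta-a|\le\rho<R$. Translating, we may assume $a=0$. Write $w=z$ and $\eta=\zeta$, so that $|w|<s$ and $|\eta|\le\rho$.

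The plan is to bound the numerator from above and the denominator from below separately, by the triangle inequality.

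\emph{Numerator.} Since $|w|<s$ and $|\eta|\le\rho$,
$$
|w-\eta|\le|w|+|\eta|\le s+\rho,
$$
so the numerator is at most $R(s+\rho)$.

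\emph{Denominator.} Since $|\bar\eta w|\le\rho s<R^2$,
$$
|R^2-\bar\eta w|\ge R^2-|\eta||w|\ge R^2-s\rho>0 .
$$
In particular the denominator never vanishes on $\Disk(a,s)$.

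Dividing the two estimates gives
$$
|b_\zeta^{a,R}(z)|\le\frac{R(s+\rho)}{R^2-s\rho},
$$
which is the claimed bound.

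Nothing here is delicate. The only point to check is positivity of $R^2-s\rho$, which follows from $s<R$ and $\rho<R$. The bound may exceed $1$ when $s+\rho$ is close to $R$, but this costs nothing: it is simply a cruder substitute for the trivial bound $|b|\le1$, and it is useful when $s$ and $\rho$ are both small relative to $R$. For instance, with $R=4$ and $s,\rho\le2$ it gives a factor strictly less than $1$, namely $4\cdot 3/(16-2)=6/7$ when $s=2$ and $\rho=1$. This is the form needed for the zero-counting step in Proposition \ref{prop:zp}.
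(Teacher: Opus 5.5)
Your argument is correct and is exactly the paper's proof. You bound $|z-\zeta|\le s+\rho$ by the triangle inequality, and $|R^2-\overline{\zeta-a}(z-a)|\ge R^2-s\rho>0$ by the reverse triangle inequality.

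One aside in your closing remark is inaccurate: $R=4$ with $s,\rho\le 2$ does not always give a factor below $1$, since $s=\rho=2$ gives $16/12=4/3$. The case actually used in Proposition~\ref{prop:zp} is $s=1$, $\rho=2$ (supremum over $\Disk(0,1)$, zeros in $\Disk(0,2)$). Because the bound is symmetric in $s$ and $\rho$, this gives the same $6/7$ you computed.
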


\begin{proof}
For $z\in\Disk(a,s)$,
$$
 |z-\zeta|\le s+\rho,
\quad
 |R^2-\overline{\zeta-a}(z-a)|\ge R^2-s\rho.
$$
The displayed estimate follows.
\end{proof}

\subsection{A zero-free quotient estimate}

The following lemma packages the Harnack and Borel-Carath\'eodory estimates
used twice in the paper.

\begin{lemma}[Zero-free quotient estimates] \label{lem:zf}
Let $h$ be holomorphic in $\Disk(0,4)$ and zero-free in $\Disk(0,2)$.
Assume
$$
 \log\frac{\sup_{\Disk(0,4)}|h|}{\sup_{\Disk(0,1)}|h|}
 \le S.
$$
Then
$$
 |h(x)|\ge e^{-CS}\sup_{\Disk(0,1)}|h|,
\quad x\in I_0,
$$
and
$$
 \left|\frac{h'(x)}{h(x)}\right|
 \le CS,
\quad x\in I_0.
$$
\end{lemma}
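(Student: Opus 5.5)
Since $h$ is zero-free in $\Disk(0,2)$, I will work with a holomorphic logarithm $\phi=\log h$ on $\Disk(0,2)$ and the harmonic function $u=\operatorname{Re}\phi=\log|h|$. Normalize $\sup_{\Disk(0,1)}|h|=1$, so that $u\le S$ on $\Disk(0,4)$, hence on $\Disk(0,2)$. The plan has three steps: find one point where $u$ is not too negative, spread this to all of $\Disk(0,3/2)$ by Harnack, and then differentiate.

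First, choose $z_*\in\overline{\Disk(0,1)}$ with $|h(z_*)|=1$, so $u(z_*)=0$. The function $v=S-u$ is nonnegative and harmonic on $\Disk(0,2)$, with $v(z_*)=S$.

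Second, apply Harnack's inequality on a disk centered at $z_*$. For instance, $\Disk(z_*,1)\subset\Disk(0,2)$, but this only controls points within distance $<1$ of $z_*$, and $I_0$ may lie at distance up to $3/2$ from $z_*$. To fix this, I will chain Harnack through a few intermediate points: first from $z_*$ to $0$ using $\Disk(z_*,1-\delta)$ (boundary issues are handled by the limiting convention), then from $0$ on the disk $\Disk(0,2)$. Since $0\le v$ on $\Disk(0,2)$, Harnack on $\Disk(0,2)$ gives $v(x)\le \frac{2+|x|}{2-|x|}v(0)\le C v(0)$ for $|x|\le 3/2$. Harnack on a disk around $z_*$ of radius close to $1$, containing $0$ when $|z_*|<1$, gives $v(0)\le C v(z_*)=CS$. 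The case $|z_*|=1$ is handled by taking radius slightly larger: the disk $\Disk(z_*,1)$ has $0$ on its boundary, so I would use $\Disk(z_*,7/8)$ together with an intermediate point $z_*/2$. Hence $v\le CS$ on $\Disk(0,3/2)$, that is, $u\ge -CS$ there. This gives the first claim, with $e^{-CS}\cdot\sup_{\Disk(0,1)}|h|$ after undoing the normalization.

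Third, for the derivative note that $h'/h=\phi'=2\,\partial_z u$. By the Borel–Carathéodory/Poisson gradient bound for a nonnegative harmonic function $v$ on $\Disk(x,r)$, namely $|\nabla v(x)|\le \frac{2}{r}v(x)$ (Harnack in differential form), applied with $x\in I_0$ and $r=1$ (note $\Disk(x,1)\subset\Disk(0,3/2)\subset\Disk(0,2)$), we get $|h'(x)/h(x)|=|\nabla u(x)|=|\nabla v(x)|\le 2v(x)\le CS$.

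The only delicate point is the geometry in the second step: $z_*$ may sit on $\partial\Disk(0,1)$, so the Harnack chain must stay inside $\Disk(0,2)$ with radius ratios bounded away from $1$. A two-step chain through $0$ (or through $z_*/2$) suffices, and all constants remain absolute. Note that $S\ge0$ automatically, since $\Disk(0,1)\subset\Disk(0,4)$. If $S=0$, then $v\ge0$ with $v(z_*)=0$, so $v\equiv0$ by the minimum principle, and both bounds hold trivially.
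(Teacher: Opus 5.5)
Your proof is correct. Its first half is the paper's argument: both use the nonnegative harmonic function $\log(\mathrm{const}/|h|)$ on $\Disk(0,2)$. This function is at most $S$ at a point of $\overline{\Disk(0,1)}$ where $|h|$ is maximal, and Harnack's inequality then spreads the bound to $\overline{\Disk(0,3/2)}$.

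Your chain through $z_*/2$ is more than is needed. Harnack on $\Disk(0,2)$ centered at $0$ already gives $v(0)\le\frac{2+|z_*|}{2-|z_*|}\,v(z_*)\le 3S$ for every $|z_*|\le1$. Your first option, a disk of radius close to $1$ about $z_*$, has a Harnack constant that degenerates whenever $|z_*|$ is close to $1$, not only when $|z_*|=1$. So a uniform chain (or the direct bound above) should be used in all cases.

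The genuine difference from the paper is in the derivative bound.
\begin{itemize}
\item The paper takes the branch $F=\log h-\log h(0)$ and converts $\osc_{\Disk(0,3/2)}\log|h|\le CS$ into $|\operatorname{Re}F|\le CS$. It then applies Borel--Carath\'eodory to obtain $\sup_{\Disk(0,1)}|F|\le CS$, and concludes with Cauchy's estimate on $\Disk(x,1/4)$.
\item You instead use $|h'/h|=|\nabla\log|h||$ together with the gradient form of Harnack's inequality, $|\nabla v(x)|\le\frac{2}{r}\,v(x)$ for $v\ge0$ harmonic on $\Disk(x,r)$. You take $r=1$, which is legitimate since $\overline{\Disk(x,1)}\subset\overline{\Disk(0,3/2)}\subset\Disk(0,2)$.
\end{itemize}
Your route gives $|h'(x)/h(x)|\le 2v(x)\le CS$ in one line. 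It stays within the theory of positive harmonic functions, needs no control of the imaginary part of the logarithm, and yields explicit constants. The paper's route gives the somewhat stronger intermediate statement that the whole holomorphic logarithm, including $\arg h$, is $O(S)$ on $\Disk(0,1)$. Since the lemma only asks for $|h'/h|$ on $I_0$, your shortcut loses nothing.
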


\begin{proof}
Let
$$
 M_4 = \sup_{\Disk(0,4)}|h|,
\quad
 u(z) = \log\frac{M_4}{|h(z)|}.
$$
Since $h$ is zero-free in $\Disk(0,2)$, $u$ is nonnegative and harmonic
there. Choose $w\in\overline{\Disk(0,1)}$ such that
$$
 |h(w)| = \sup_{\overline{\Disk(0,1)}}|h|.
$$
Then $u(w)\le S$. By Harnack's inequality on compact subdisks of
$\Disk(0,2)$,
$$
 u(z)\le CS,
\quad |z|\le3/2.
$$
In particular, for $x\in I_0$,
$$
 |h(x)|\ge M_4e^{-CS}\ge e^{-CS}\sup_{\Disk(0,1)}|h|.
$$

For the logarithmic derivative, take a holomorphic branch of $\log h$ in
$\Disk(0,2)$, and put
$$
 F(z) = \log h(z)-\log h(0).
$$
The preceding estimate gives
$$
 \osc_{\Disk(0,3/2)}\log|h|\le CS,
$$
hence $|\operatorname{Re}F(z)|\le CS$ for $|z|\le3/2$. Thus a bound for
the real part of the holomorphic logarithm on the larger disk is available.
By the Borel-Carath\'eodory inequality, this gives
$\sup_{\Disk(0,1)}|F|\le CS$. Since
$\Disk(x,1/4)\subset\Disk(0,1)$ for $x\in I_0$, Cauchy's estimate gives
$|F'(x)|\le CS$. Since $F' = h'/h$, the proof is complete.
\end{proof}

\subsection{Proof of the effective zero-product proposition} \label{app:zp}

\begin{proof}[Proof of Proposition \ref{prop:zp}]
We begin with the global zero count. Let $\mathcal Z$ be the zeros of $f$
in $\Disk(0,2)$, counted with multiplicity, and let $N = |\mathcal Z|$.
Form the partial Blaschke product $B = B_{\mathcal Z}^{0,4}$. Lemma
\ref{lem:pb}, with $a = 0$, $R = 4$, and $s = 1$, gives
$$
 \sup_{\Disk(0,1)}|f|
 \le
 \left(\sup_{\Disk(0,1)}|B|\right)
 \sup_{\Disk(0,4)}|f|.
$$
For every zero $\zeta\in\mathcal Z$ we have $|\zeta|<2$, and Lemma
\ref{lem:bf} gives
$$
 \sup_{\Disk(0,1)}|B|\le (6/7)^N.
$$
Combining this with \eqref{G0} yields
$$
 1\le (6/7)^N e^{H_0},
$$
and hence
$$
 N\le C H_0.
$$

Next factor out the zeros in $\Disk(0,2)$:
$$
 h = \frac{f}{B_{\mathcal Z}^{0,4}}.
$$
The quotient is holomorphic in $\Disk(0,4)$ and zero-free in $\Disk(0,2)$.
Since $|B_{\mathcal Z}^{0,4}| = 1$ on $\partial\Disk(0,4)$ and
$|B_{\mathcal Z}^{0,4}|\le1$ in $\Disk(0,4)$,
$$
 \sup_{\Disk(0,4)}|h|\le\sup_{\Disk(0,4)}|f|,
\quad
 \sup_{\Disk(0,1)}|h|\ge\sup_{\Disk(0,1)}|f|\ge1.
$$
By \eqref{G0}, the hypothesis of Lemma
\ref{lem:zf} holds with $S = H_0$. Hence
$$
 |h(x)|\ge e^{-CH_0},
\quad x\in I_0.
$$
For $x\in I_0$ and $\zeta\in\Disk(0,2)$,
$$
 \left|4\frac{x-\zeta}{16-\overline\zeta x}\right|
 \ge c\min\{1,|x-\zeta|\}.
$$
Using $N\le CH_0$, we obtain
\begin{equation} \label{eq:allz}
 |f(x)|
 \ge
 e^{-CH_0}
 \prod_{\zeta\in\mathcal Z}\min\{1,|x-\zeta|\},
\quad x\in I_0.
\end{equation}

It remains to reduce the full product to the first $d$ nearest zeros. As
throughout, possible zeros on the auxiliary boundary are handled by replacing
the radius $1$ by $1-\eta$ or $1+\eta$, proving the estimate with constants
independent of $\eta$, and then letting $\eta\downarrow0$. We shall not
repeat this limiting step in the notation below.

Fix $x\in I_0$. The zeros of $f$ in $\Disk(x,1)$ are among the zeros in
$\Disk(0,2)$, because $|x|\le1/2$. Let
$$
 \zeta_1,\ldots,\zeta_M
$$
be these local zeros, counted with multiplicity and ordered by increasing
$|\zeta_j-x|$. We claim that
\begin{equation} \label{eq:tail}
 \prod_{j = d+1}^{M}|\zeta_j-x|
 \ge e^{-CH_1}
\end{equation}
whenever $M\ge d+1$. To prove the claim, first note that the number of
local zeros satisfies $M\le CH_1$. Indeed, applying Lemma
\ref{lem:pb} in $\Disk(x,4)$ to the zeros in
$\Disk(x,1)$, and using Lemma \ref{lem:bf} with
$R = 4$, $s = 1$, $\rho = 1$, gives
$$
 \sup_{\Disk(x,1)}|f|
 \le (8/15)^M\sup_{\Disk(x,4)}|f|.
$$
By \eqref{G1} with $r = 1$,
$$
 \sup_{\Disk(x,4)}|f|\le e^{H_1}\sup_{\Disk(x,1)}|f|,
$$
and therefore $M\le CH_1$.

Assume now $M\ge d+1$, and put
$$
 \varrho = |\zeta_{d+1}-x|.
$$
We first show that $\varrho>0$. If $\varrho=0$, then $f$ has a zero at
$x$ of multiplicity $k\ge d+1$. Thus
$$
 \sup_{\Disk(x,r)}|f| = O(r^k)
$$
as $r\downarrow0$, and hence
$$
 r^{-d}\sup_{\Disk(x,r)}|f| = O(r^{k-d})\to0.
$$
This contradicts \eqref{G1}, because $\sup_{\Disk(x,4)}|f|>0$. Therefore
$0<\varrho<1$.

We prove \eqref{eq:tail}. To avoid any boundary convention, fix
$\eta>0$ so small that
$$
 r_\eta = (1+\eta)\varrho < 1.
$$
The parameter $\eta$ is only used to avoid boundary issues. Set
$$
 M_4 = \sup_{\Disk(x,4)}|f|,
\quad
 M_{r_\eta} = \sup_{\Disk(x,r_\eta)}|f|.
$$
By \eqref{G1} with $r = r_\eta$,
$$
 \frac{M_{r_\eta}}{M_4}
 \ge
 e^{-H_1}r_\eta^d.
$$
On the other hand, form the partial Blaschke product in $\Disk(x,4)$ over
all local zeros. Lemma \ref{lem:pb}, applied with $s = r_\eta$, gives
$$
 \frac{M_{r_\eta}}{M_4}
 \le
 \sup_{\Disk(x,r_\eta)}|B|.
$$
For $z\in\Disk(x,r_\eta)$, write $u=z-x$ and $a_j=\zeta_j-x$. Since
$|u|<r_\eta<1$ and $|a_j|<1$, the denominator of each Blaschke factor in
$\Disk(x,4)$ is at least $15$ in modulus. For $1\le j\le d$,
$|a_j|\le\varrho<r_\eta$, hence
$$
 |u-a_j|\le |u|+|a_j|\le 2r_\eta.
$$
For $j\ge d+1$, $|a_j|\ge\varrho$, and therefore, for $0<\eta\le1$,
$$
 |u-a_j|\le r_\eta+|a_j|
 \le (2+\eta)|a_j|
 \le 3|a_j|.
$$
Consequently
$$
 \sup_{\Disk(x,r_\eta)}|B|
 \le
 C^M r_\eta^d\prod_{j=d+1}^{M}|\zeta_j-x|.
$$
Combining the last two displays gives
$$
 \prod_{j=d+1}^{M}|\zeta_j-x|
 \ge
 e^{-H_1}C^{-M}.
$$
Since $M\le CH_1$, this proves \eqref{eq:tail}.

We now pass from the full product to the first $d$ nearest zeros. All zeros of
$\mathcal Z$ outside $\Disk(x,1)$ contribute the factor $1$ to
$\min\{1,|x-\zeta|\}$. Zeros with $|\zeta-x| = 1$ are not included among
the local zeros and also contribute the factor $1$. Hence
$$
 \prod_{\zeta\in\mathcal Z}\min\{1,|x-\zeta|\}
 =
 \prod_{j=1}^{M}|\zeta_j-x|.
$$
If $M\le d$, then all local zeros appear among the first $\min\{d,N\}$ zeros
of $\mathcal Z$ ordered by distance from $x$, and every remaining zero
contributes the factor $1$. If $M\ge d+1$, the tail estimate \eqref{eq:tail}
gives
$$
 \prod_{\zeta\in\mathcal Z}\min\{1,|x-\zeta|\}
 =
 \prod_{j=1}^{M}|\zeta_j-x|
 \ge
 e^{-CH_1}\prod_{j=1}^{d}|\zeta_j-x|.
$$
In this case $\zeta_1,\ldots,\zeta_d$ are exactly the first $d$ zeros in the
global ordering, because local zeros have distance $<1$ and nonlocal zeros have
distance at least $1$. Combining both cases with \eqref{eq:allz} gives the
stated estimate.
\end{proof}

\subsection{Cartan covering}

\begin{lemma}[Cartan covering] \label{lem:cartan}
Let $N\ge0$, and let $w_1,\ldots,w_N\in\C$ be labelled points, with
repetitions allowed. Let $0<h\le1$. If $N\ge1$, then for each $z\in\C$
let
$$
 d_1(z)\le\cdots\le d_N(z)
$$
be the numbers $|z-w_1|,\ldots,|z-w_N|$ arranged in nondecreasing order.
There are absolute constants $c,C>0$ and a family of at most $N$ disks
$\{D_\nu\}$ such that
$$
 \sum_\nu\rad(D_\nu)\le Ch
$$
and, for every $z\notin\bigcup_\nu D_\nu$,
$$
 d_j(z)\ge c\frac{hj}{N},
\quad 1\le j\le N.
$$
Consequently, for every integer $d\ge1$, if $q = \min\{d,N\}$, then outside
$\bigcup_\nu D_\nu$,
$$
 \prod_{j = 1}^{q}\min\{1,d_j(z)\}
 \ge e^{-CN}h^d.
$$
When $N = 0$, the covering is empty and the product is $1$.
\end{lemma}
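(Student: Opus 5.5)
We will prove the Cartan covering lemma by the classical greedy/maximal-cluster construction, in its one-scale form. Set $\delta = h/N$; we may assume $N\ge1$. Call a closed disk $\overline{\Disk}(z,r)$ \emph{heavy} if it contains at least $k$ of the labelled points (with multiplicity) while $r = k\delta$ for the corresponding integer $k$. The aim is to find disks $D_\nu$ of total radius at most $Ch$ such that every point $z$ outside their union satisfies
$$
\#\{\ell: |z-w_\ell|\le j\delta\} < j \quad\text{for every } 1\le j\le N .
$$
Once this holds, we get $d_j(z) > j\delta = hj/N$ for all $j$, which is the first claim with $c = 1$.

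The construction follows Cartan's standard procedure. Let $k_1$ be the largest integer $k\le N$ for which some closed disk of radius $k\delta$ contains at least $k$ points, and pick such a disk $\Delta_1$. Remove the points it contains, which number at least $k_1$. Repeat on the remaining $N-k_1$ points, obtaining $k_2\le k_1,\dots$, and stop when no disk of radius $k\delta$ with $k\ge1$ contains $k$ remaining points. The disks $\Delta_\nu$ have radii $k_\nu\delta$, and since $\sum k_\nu\le N$, the total radius is at most $N\delta = h$. Let $D_\nu$ be the concentric disk with radius doubled, $2k_\nu\delta$. There are at most $N$ of them and their total radius is at most $2h$.

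The key verification, and the main obstacle, is this: if $z\notin\bigcup D_\nu$ and the disk $\overline{\Disk}(z,j\delta)$ contained at least $j$ points, we must derive a contradiction. I would argue as follows. Some of those points were removed at stages $\nu$ with $k_\nu\ge j$; otherwise, at the first stage with $k_\nu<j$, the disk around $z$ would still contain at least $j$ remaining points, contradicting the maximality of $k_\nu$. Consider a stage $\nu$ at which a point $w$ of $\overline{\Disk}(z,j\delta)$ lies in $\Delta_\nu$ with $k_\nu\ge j$. Then $|z-\text{center}_\nu|\le j\delta + k_\nu\delta\le 2k_\nu\delta$, so $z\in \overline{D_\nu}$. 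This gives a contradiction, provided we use open disks with radius slightly larger than doubled, or radius $3k_\nu\delta$. The careful point is to organise the induction on stages, so that at the first stage where the remaining points in $\overline{\Disk}(z,j\delta)$ drop below $j$, some removed point lies in a $\Delta_\nu$ with $k_\nu\ge j$. This holds because until that stage the disk around $z$ itself is a candidate of size $j$, which forces $k_\nu\ge j$. Tripling the radii gives total radius at most $3h$ and makes the inequality strict.

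For the product consequence, let $q=\min\{d,N\}$. Outside the disks we have $\min\{1,d_j(z)\}\ge \min\{1, hj/N\}\ge hj/N$, since $h\le1$ and $j\le N$. Hence
$$
\prod_{j=1}^q \min\{1,d_j(z)\} \ge h^q\frac{q!}{N^q}\ge h^d\,\frac{q!}{N^q}.
$$
Using $q!\ge (q/e)^q$ gives $q!/N^q\ge (q/(eN))^q$. The function $q\mapsto q\log(eN/q)$ is at most $N$ for $q\le N$, since its maximum is attained at $q=N$. Thus the product is at least $e^{-N}h^d$, which gives the stated $e^{-CN}h^d$. The case $N=0$ is trivial.
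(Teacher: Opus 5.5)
Your proof is correct, but it uses the classical Boutroux--Cartan removal scheme rather than the paper's construction.

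The paper fixes $r_j=hj/(4N)$. It forms in advance one candidate closed disk $\Delta_S$ of radius $r_{|S|}$ for every label set $S$ whose closed $r_{|S|}$-disks around the $w_i$, $i\in S$, have a common point. It then makes a single Vitali-type greedy pass from level $N$ down to level $1$, keeping a candidate only if it is disjoint from the disks already kept. Disjointness of the kept disks forces disjointness of their label sets, which gives $\sum_\nu j_\nu\le N$. Its verification is static: a heavy disk $\overline{\Disk(z,r_j)}$ yields a candidate $\Delta_S$ that was either kept or blocked by a kept disk of level at least $j$. A four-step triangle inequality then puts $z$ in the sixfold enlargement.

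You instead recompute the maximal heavy level on the remaining points after each removal. Your $\Delta_\nu$ may therefore overlap; only the removed point sets are disjoint, which is all that $\sum_\nu k_\nu\le N$ needs. Your verification is dynamic: at the first stage where the number of remaining points in $\overline{\Disk(z,j\delta)}$ drops below $j$, maximality gives $k_\nu\ge j$. A two-step triangle inequality then gives $|z-c_\nu|\le 2k_\nu\delta$.

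Your route gives better constants: $c=1$, total radius at most $3h$, and $e^{-N}h^d$ in the product bound. The paper's one-pass selection avoids the stage bookkeeping, at the price of the factors $4$ and $6$.

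One point you should state explicitly: the process stops only when no points remain, because a disk of radius $\delta$ about any remaining point is heavy with $k=1$. This guarantees that the stage where the count drops below $j$ actually exists. It also covers a case your first formulation leaves open, namely when no stage has $k_\nu<j$.
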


\begin{proof}
The case $N = 0$ is immediate. Assume $N\ge1$. Put
$$
 r_j = \frac{hj}{4N},
\quad 1\le j\le N.
$$
For each $j$ and each subset $S\subset\{1,\ldots,N\}$ with $|S| = j$ such
that
$$
 \bigcap_{i\in S}\overline{\Disk(w_i,r_j)}\ne\varnothing,
$$
choose a point $c_S$ in the intersection and form
$\Delta_S = \overline{\Disk(c_S,r_j)}$. Thus $\Delta_S$ contains the labelled
points $w_i$, $i\in S$.

Run the following finite greedy selection. Starting at level $j = N$ and then
proceeding through $j = N-1,\ldots,1$, list all disks $\Delta_S$ at that
level in an arbitrary order. Select $\Delta_S$ if it is disjoint from all
previously selected disks. Let the selected disks be
$\Delta_\nu = \overline{\Disk(c_\nu,r_{j_\nu})}$. They are pairwise disjoint,
and their distinguished labelled sets are disjoint; hence
$$
 \sum_\nu j_\nu\le N.
$$
Therefore
$$
 \sum_\nu \rad(\Delta_\nu)
 = \frac h{4N}\sum_\nu j_\nu
 \le \frac h4.
$$
Let
$$
 D_\nu = \Disk(c_\nu,6r_{j_\nu}).
$$
Then $\sum_\nu\rad(D_\nu)\le Ch$ and the number of disks is at most $N$.

We prove the ordered-distance estimate. Suppose that a closed disk
$\overline{\Disk(z,r_j)}$ contains at least $j$ labelled points. Choose a
set $S$ of $j$ corresponding labels. The associated disk $\Delta_S$
intersects $\overline{\Disk(z,r_j)}$. When $\Delta_S$ was considered in
the greedy selection, either it was selected, or it met a previously selected
disk $\Delta_\nu$ with $j_\nu\ge j$. In both cases there is a selected
$\Delta_\nu$ with $j_\nu\ge j$ such that
$\Delta_S\cap\Delta_\nu\ne\varnothing$. Since
$r_{j_\nu}\ge r_j$, this implies
$$
 \overline{\Disk(z,r_j)}\subset \overline{\Disk(c_\nu,5r_{j_\nu})}
 \subset D_\nu.
$$
Thus, if $z\notin\bigcup_\nu D_\nu$, no disk $\overline{\Disk(z,r_j)}$
contains $j$ labelled points. Therefore
$$
 d_j(z)\ge r_j = \frac{hj}{4N},
\quad 1\le j\le N.
$$

Let $q = \min\{d,N\}$. Outside the exceptional disks,
$$
 \prod_{j = 1}^{q}\min\{1,d_j(z)\}
 \ge
 \prod_{j = 1}^{q}\frac{hj}{4N}
 =
 \left(\frac h{4N}\right)^q q!.
$$
If $d\le N$, Stirling's lower bound gives
$$
 \left(\frac h{4N}\right)^d d!
 \ge
 h^d\exp\left(-d\log\frac{4eN}{d}\right)
 \ge e^{-CN}h^d.
$$
If $N<d$, then
$$
 \left(\frac h{4N}\right)^N N!
 \ge h^N(4e)^{-N}
 \ge e^{-CN}h^d,
$$
because $0<h\le1$ and $N<d$. This proves the lemma.
\end{proof}

%%%%%%%%%%%%%%%%%%%%%%%%%%%%%%%%%%%%%%%%%%%
\section{The weak logarithmic derivative estimate} \label{app:wld}

This appendix proves Lemma \ref{lem:wld}. The proof has four ingredients:
disk zero counting, Blaschke factorization, the zero-free quotient estimate
from Lemma \ref{lem:zf}, and a weak Cauchy estimate. The last ingredient is a
standard consequence of the weak $(1,1)$ inequality for the Hilbert transform.
We emphasize that the zero-counting lemma below uses the disk-growth
consequence of the classical interval Tur\'an inequality, Lemma
\ref{lem:edg}. Thus the weak logarithmic-derivative estimate also ultimately
depends on the same classical interval Tur\'an input, and on no measurable
Tur\'an-Nazarov estimate.

\begin{lemma}[Disk zero counting for exponential polynomials] \label{lem:zc}
Let
$$
 q(z) = \sum_{k = 1}^m a_k e^{\mu_kz}
$$
be a nonzero exponential polynomial of order at most $m$, and put
$\sigma = \max_k|\mu_k|$. Then, for every $z_0\in\C$ and every $r>0$,
the number of zeros of $q$ in $\overline{\Disk(z_0,r)}$, counted with
multiplicity and understood through the usual limiting convention, is at most
$$
 C(m+\sigma r).
$$
\end{lemma}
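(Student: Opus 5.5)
The plan is to combine the disk-growth bound of Lemma \ref{lem:edg} with the Blaschke zero-counting argument already used in the proof of Proposition \ref{prop:zp}. By the limiting convention it suffices to count the zeros in the open disk $\Disk(z_0,r')$ for every $r'$ slightly larger than $r$, say $r'\le 2r$, with a bound independent of $r'$. So fix $z_0$ and $r'$, and let $Z$ be the multiset of zeros of $q$ in $\Disk(z_0,r')$, with $K=|Z|$.

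First we bound the growth of $q$ from the small disk to a large one. Put $R=4r'$ and apply Lemma \ref{lem:edg} with radii $r'<R$:
$$
 \sup_{\Disk(z_0,4r')}|q|
 \le
 \exp(8r'\sigma)\,C^{m-1}4^{m-1}\sup_{\Disk(z_0,r')}|q|.
$$
Hence
$$
\log\frac{\sup_{\Disk(z_0,4r')}|q|}{\sup_{\Disk(z_0,r')}|q|}\le C(m+\sigma r').
$$
The left side is finite because $q\not\equiv0$ forces $\sup_{\Disk(z_0,r')}|q|>0$.

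Next we turn this growth bound into a zero count. Apply Lemma \ref{lem:pb} in $\Disk(z_0,4r')$ to the multiset $Z$, with $s=r'$. This gives
$$
\sup_{\Disk(z_0,r')}|q|\le \Bigl(\sup_{\Disk(z_0,r')}|B_Z^{z_0,4r'}|\Bigr)\sup_{\Disk(z_0,4r')}|q|.
$$
Lemma \ref{lem:bf}, applied with $R=4r'$, $s=r'$ and $\rho=r'$, bounds each Blaschke factor on $\Disk(z_0,r')$ by
$$
\frac{4r'\cdot 2r'}{16r'^2-r'^2}=\frac{8}{15}.
$$
The estimate is scale invariant, so $r'$ cancels. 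This yields
$$
1\le (8/15)^K e^{C(m+\sigma r')},
$$
and therefore $K\le C(m+\sigma r')\le C(m+\sigma r)$, since $r'\le 2r$. Letting $r'\downarrow r$ gives the count for the closed disk.

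The only real point to check is that the constant stays uniform across scales. Lemma \ref{lem:edg} holds for arbitrary $0<r<R$ with the factor $(CR/r)^{m-1}$, which here equals $(4C)^{m-1}$. The exponential factor is $\exp(2R\sigma)=\exp(8r'\sigma)$, which is exactly of the form $\sigma r$. No step depends on $r$ being small or large, so I expect no serious obstacle. If one wants to avoid the subtlety of zeros on $\partial\Disk(z_0,r')$, one simply counts zeros in the open disk of radius $r'$, which contains the closed disk of radius $r$.
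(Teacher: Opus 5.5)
Your proposal is correct and follows essentially the paper's proof: you use Lemma \ref{lem:edg} to bound the growth of $q$ from $\Disk(z_0,r')$ to a concentric disk of proportional radius, then a partial Blaschke product via Lemmas \ref{lem:pb} and \ref{lem:bf} to get a factor $\theta^{K}$ with an absolute $\theta<1$. The differences are cosmetic: you take outer radius $4r'$ (so $\theta=8/15$) where the paper takes $8r$, and you handle the closed disk with one enlarged radius $r'\in(r,2r]$ instead of a limit $r+\eta$, $\eta\downarrow0$.
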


\begin{proof}
It is enough to count zeros in $\Disk(z_0,r)$, since the closed-disk version
follows by replacing $r$ by $r+\eta$ and letting $\eta\downarrow0$. Let
$Z$ be the multiset of zeros in $\Disk(z_0,r)$, and put $N = |Z|$. Set
$R = 8r$. By Lemma \ref{lem:edg},
$$
 \log\frac{\sup_{\Disk(z_0,R)}|q|}{\sup_{\Disk(z_0,r)}|q|}
 \le C(m+\sigma r).
$$
Form the partial Blaschke product in $\Disk(z_0,R)$ over the zeros in $Z$.
By Lemmas \ref{lem:pb} and
\ref{lem:bf},
$$
 \sup_{\Disk(z_0,r)}|q|
 \le \theta^N\sup_{\Disk(z_0,R)}|q|
$$
with an absolute $\theta<1$. Taking logarithms gives the estimate.
\end{proof}

\begin{lemma}[Weak Cauchy estimate] \label{lem:cauchy}
For $\zeta_1,\ldots,\zeta_N\in\C$ and $y>0$,
$$
 \left|
 \left\{x\in\R:
 \left|\sum_{j = 1}^N\frac1{x-\zeta_j}\right|>y
 \right\}
 \right|
 \le
 \frac{CN}{y},
$$
where real poles are omitted from the level set.
\end{lemma}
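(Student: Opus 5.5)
The plan is to reduce the claim to the weak $(1,1)$ inequality for the Hilbert transform, applied to the measure $\mu=\sum_j\delta_{\zeta_j}$ after moving each pole onto the real line. Write $\zeta_j=\xi_j+i\eta_j$. For $\eta_j\ne0$,
$$
 \frac1{x-\zeta_j}=\frac{(x-\xi_j)+i\eta_j}{(x-\xi_j)^2+\eta_j^2},
$$
so the real part is a convolution of $\delta_{\xi_j}$ with the conjugate Poisson kernel at height $|\eta_j|$, and the imaginary part is $\pm\pi$ times the Poisson kernel $P_{|\eta_j|}(x-\xi_j)$. I will treat the real and imaginary parts separately, since
$$
 \{|S|>y\}\subset\{|\operatorname{Re}S|>y/2\}\cup\{|\operatorname{Im}S|>y/2\},
 \qquad S(x)=\sum_{j=1}^N\frac1{x-\zeta_j},
$$
and it suffices to show each set has measure at most $CN/y$.

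For the imaginary part, each term is positive up to sign and has integral $\pm\pi$. Hence $\|\operatorname{Im}S\|_{L^1(\R)}\le\pi N$, and Chebyshev gives the bound $2\pi N/y$. Real poles contribute nothing to the imaginary part.

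For the real part, I will write
$$
 \operatorname{Re}S=\sum_j Q_{|\eta_j|}*\delta_{\xi_j},
 \qquad Q_\eta(x)=\frac{x}{x^2+\eta^2},
$$
where the terms with $\eta_j=0$ give the Hilbert kernel $1/(x-\xi_j)$ itself. If all $\eta_j$ were equal, this would be the conjugate Poisson extension of the finite measure $\mu$ at a fixed height. Such extensions are dominated by the truncated Hilbert transform plus the Poisson maximal function, both of which are weak $(1,1)$ on measures with norm at most $C\|\mu\|=CN$.

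The main obstacle is that the heights $\eta_j$ differ. To handle this, I will decompose each kernel as
$$
 Q_{\eta}(x)=\frac1x\mathbf 1_{|x|>\eta}+E_\eta(x),
 \qquad \|E_\eta\|_{L^1}\le C
$$
uniformly in $\eta$. A direct computation confirms the uniform bound: for $|x|\le\eta$ one has $|Q_\eta|\le 1/\eta$, and for $|x|>\eta$,
$$
 \left|\frac1x-Q_\eta(x)\right|=\frac{\eta^2}{|x|(x^2+\eta^2)}\le\frac{\eta^2}{|x|^3}.
$$
The error terms then sum to a function of $L^1$ norm at most $CN$ and are handled by Chebyshev.

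The remaining main part is $\sum_j \frac{\mathbf 1_{|x-\xi_j|>|\eta_j|}}{x-\xi_j}$, a sum of Hilbert kernels truncated at point-dependent radii. I will control it by comparing with the untruncated Hilbert kernel sum $\sum_j 1/(x-\xi_j)$, which is weak $(1,1)$ with bound $CN/y$ by the classical Kolmogorov/Loomis inequality for sums of point masses; Loomis's lemma gives $|\{|\sum_j 1/(x-\xi_j)|>y\}|=2N/y$ exactly. The difference between the truncated and untruncated sums is $\sum_j \frac{\mathbf 1_{|x-\xi_j|\le|\eta_j|}}{x-\xi_j}$. This difference is not in $L^1$, but its level set is contained in $\bigcup_j[\xi_j-|\eta_j|,\xi_j+|\eta_j|]$, and that union may be large, so this step needs more care.

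My alternative for the difference term is to handle each pole by the exact Loomis-type identity for a single complex Cauchy kernel, $|\{|1/(x-\zeta)|>t\}|\le 2/t$, combined with a subadditive weak-type argument. Weak $L^1$ is not normable, however, so this only loses a $\log N$ factor.

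The cleanest route, which I expect to adopt, is to invoke the weak $(1,1)$ bound for the Cauchy transform of a finite measure on the upper half-plane restricted to the line. Write $\sum_j 1/(x-\zeta_j)$ as a sum over $\eta_j>0$, $\eta_j<0$, and $\eta_j=0$. For $\eta_j>0$, the function $\sum 1/(z-\zeta_j)$ is holomorphic in $\operatorname{Im}z<0$, and its boundary values on $\R$ are $g=(I+iH)u$ with $u=\operatorname{Im}$-part, which equals $\pi$ times a sum of Poisson kernels with $\|u\|_1=\pi N$. Thus the real part equals $H$ of the imaginary part, and the weak $(1,1)$ inequality for $H$ on $L^1$ functions gives the bound $CN/y$. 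The same argument applies for $\eta_j<0$, and the real poles are handled by Loomis. Summing the three pieces completes the proof.
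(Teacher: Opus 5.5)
Your final route is correct and is essentially the paper's argument. The imaginary parts are Poisson kernels, bounded in $L^1$ by $\pi N$ and handled by Chebyshev. The real parts are Hilbert transforms of those same Poisson kernels, so weak $(1,1)$ applies.

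The differences are minor:
\begin{itemize}
\item You treat real poles by Boole's (Loomis's) exact identity, whereas the paper applies weak $(1,1)$ for $H$ to point masses.
\item Splitting by half-plane is unnecessary, since $\operatorname{Re}\frac{1}{x-\zeta}$ is, up to normalization, the Hilbert transform of the Poisson kernel at height $|\operatorname{Im}\zeta|$ whatever the sign of $\operatorname{Im}\zeta$.
\item The formula $g=(I+iH)u$ is a slip. With $u=\operatorname{Im} g$ and poles in the upper half-plane, the boundary values are $g=Hu+iu$, which is what your correct conclusion ``real part $=H$ of the imaginary part'' actually uses.
\item The truncation and $\log N$ detours can be dropped.
\end{itemize}
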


\begin{proof}
This is a standard consequence of the weak $(1,1)$ inequality for the
Hilbert transform. We spell out the reduction because the estimate is used in
an essential way below. For a real pole, the kernel $(x-a)^{-1}$ is, up to the
normalization of the Hilbert transform, the Hilbert transform of the point mass
at $a$. Thus all real poles contribute the Hilbert transform of a finite signed
measure of total variation at most $CN$.

For a non-real pole $\zeta = a+ib$,
$$
 \frac1{x-\zeta}
 =
 \frac{x-a}{(x-a)^2+b^2}
 +i\frac{b}{(x-a)^2+b^2}.
$$
The imaginary part has $L^1(\R)$-norm $\pi$. The real part is, up to sign
and normalization, the Hilbert transform of the Poisson kernel
$$
 \frac{|b|}{(x-a)^2+b^2},
$$
whose $L^1(\R)$-norm is $\pi$. Hence, after summing all real and non-real
poles, we can write
$$
 \sum_{j=1}^N\frac1{x-\zeta_j}
 = H\mu(x)+P(x)
$$
outside the real poles, where $\mu$ is a finite signed measure with
$\|\mu\|\le CN$ and $P\in L^1(\R)$ satisfies
$\|P\|_1\le CN$. The weak $(1,1)$ inequality for the Hilbert transform and
Chebyshev's inequality give
$$
 |
 \{x:|H\mu(x)|>y/2\}
 |
 \le \frac{C\|\mu\|}{y},
\quad
 |
 \{x:|P(x)|>y/2\}
 |
 \le \frac{2\|P\|_1}{y}.
$$
Since $|H\mu+P|>y$ implies one of the two inequalities on the left, the
claimed estimate follows; see, for example, \cite[Chapter~3]{Duoandikoetxea}.
\end{proof}

\begin{proof}[Proof of Lemma \ref{lem:wld}]
If $u\le C_0(m+\sigma)$, with $C_0$ sufficiently large, then the desired
estimate is trivial because $|I_0| = 1$. Hence assume
$$
 u>C_0(m+\sigma).
$$

Let $Z = \{\zeta_1,\ldots,\zeta_N\}$ be the multiset of zeros of $q$ in
$\Disk(0,2)$, counted with multiplicity, and set
$$
 B = B_Z^{0,4},
\quad
 h = q/B.
$$
By Lemma \ref{lem:zc},
$$
 N\le C(m+\sigma).
$$
The quotient $h$ is holomorphic in $\Disk(0,4)$ and zero-free in
$\Disk(0,2)$. Since $|B| = 1$ on $\partial\Disk(0,4)$ and $|B|\le1$
in $\Disk(0,4)$,
$$
 \sup_{\Disk(0,4)}|h|\le\sup_{\Disk(0,4)}|q|,
\quad
 \sup_{\Disk(0,1)}|h|\ge\sup_{\Disk(0,1)}|q|.
$$
Lemma \ref{lem:edg}, with $R = 4$ and $r = 1$, gives
$$
 \log\frac{\sup_{\Disk(0,4)}|h|}{\sup_{\Disk(0,1)}|h|}
 \le C(m+\sigma).
$$
By Lemma \ref{lem:zf},
$$
 \left|\frac{h'(t)}{h(t)}\right|
 \le C(m+\sigma),
\quad t\in I_0.
$$

Away from the real zeros of $q$,
$$
 \frac{q'}q = \frac{B'}B+\frac{h'}h.
$$
For one Blaschke factor
$$
 b_\zeta(z) = 4\frac{z-\zeta}{16-\overline\zeta z},
$$
we have
$$
 \frac{b_\zeta'}{b_\zeta}(z)
 =
 \frac1{z-\zeta}
 +
 \frac{\overline\zeta}{16-\overline\zeta z}.
$$
For $t\in I_0$ and $|\zeta|<2$, the second term is bounded absolutely by
an absolute constant. Therefore
$$
 \left|\frac{q'(t)}{q(t)}\right|
 \le
 \left|\sum_{j = 1}^N\frac1{t-\zeta_j}\right|
 +C(m+\sigma).
$$
Since $u>C_0(m+\sigma)$, with $C_0$ large enough, the level set
$\{|q'/q|>u\}$ is contained in
$$
 \left\{t\in I_0:
 \left|\sum_{j = 1}^N\frac1{t-\zeta_j}\right|>u/2
 \right\}.
$$
By Lemma \ref{lem:cauchy}, its measure is at most
$$
 \frac{CN}{u}
 \le
 \frac{C(m+\sigma)}{u}.
$$
This proves the lemma.
\end{proof}

%%%%%%%%%%%%%%%%%%%%%%%%%%%%%%%%%%%%%%%%%%%
\section*{Acknowledgements}

The author is grateful to Philippe Jaming and Evgueni Abakoumov for encouraging
comments on an earlier version of this work.

%%%%%%%%%%%%%%%%%%%%%%%%%%%%%%%%%%%%%%%%%%%

\end{document}